\newtheorem{thm}{Theorem}[section]
\newtheorem{cor}[thm]{Corollary}
\newtheorem{lem}[thm]{Lemma}
\newtheorem{prop}[thm]{Proposition}
\newtheorem{defn}[thm]{Definition}
\newtheorem{rem}[thm]{Remark}
\newcommand{\comment}[1]{}
\begin{document}

\title[Generic Properties Magnetic] {Generic Properties of Magnetic Flows}
\author{Alexander Arbieto and Freddy Castro}
\address{Instituto de Matem\'atica, Universidade Federal do Rio de Janeiro, P. O. Box 68530, 21945-970 Rio de Janeiro, Brazil.}
\email{arbieto@im.ufrj.br}
\email{vicente@im.ufrj.br, freddycastrovicente@gmail.com}
\thanks{}

\subjclass[2000]{37C40, 37A35, 37C15}
\date{Sep 16, 2015}
\keywords{Magnetic Flows, Kupka - Smale Theorem, Geometric Control Theory}
\commby{}

\begin{abstract}
We obtain Kupka-Smale's theorem and Franks' lemma for magnetic flows on manifolds with any dimension. This improves Miranda's result \cite{M1,M2} on surfaces. However our methods  relies on geometric control theory, like in Rifford and Ruggiero articles \cite{RR,LRR}.
\end{abstract}

\maketitle

\section{Introduction}

The perturbative theory on dynamical systems is one of the most powerful ones to describe  robust and generic properties of dynamical systems. For instance, the well known Pugh's closing lemma shows that for generic diffeomorphisms the non-wandering set is the closure of periodic points, in the $C^1$-topology. Thus, showing the existence of periodic points for generic diffeomorphisms.

Previous perturbations theorems were Kupka-Smale's theorem \cite{P} and the Franks' lemma \cite{F}. Both deals with the derivative on periodic points (even that, the second part of Kupka-Smale theorem deals with the transversality of the invariant manifolds of periodic points).

Together they can be used to show that stable systems has the property that all of its periodic orbits are hyperbolic. Indeed, any stable diffeomorphism $f$ has a nearby diffeomorphisms which is Kupka-Smale, thus having countable periodic points of the same period (actually finite, since the manifold is compact). However, if there are  diffeomorphisms, nearby to $f$, with a non-hyperbolic periodic orbit, then by Franks' lemma, we are able to obtain another diffeomorphism close to the original, such that locally the dynamics is linear (non hyperbolic). Thus, exhibiting non countable periodic orbits of the same period. Since both dynamics, the Kupka-Smale one and the later one, need to be conjugate, we reach a contradiction.

Those perturbations tools were generalized to more difficult settings: vector fields, Hamiltonians, etc. However,  there are very  important dynamics coming from the differential geometry: geodesic flows. The understanding of this flow quickly became very important. It was Hopf (for surfaces) and Anosov (the general case), that give us tools for the understanding of the dynamics and statistics of that flow for negatively curved Riemannian manifolds.

Since then, perturbative analysis was employed to understand those manifolds in the general case. However, there was a huge obstacle to this understanding. The main one is that for diffeomorphism and vector fields the perturbations are local. However, for metrics (like in the geodesic flow) the perturbations are local in the manifold, but the dynamics lives in the tangent bundle. Thus, the effect of the perturbation spreads in a cylinder on the tangent bundle. This brings more difficulties to control the recurrence of the initial orbits.

The first ones to deal with Kupka-Smale's theorem for geodesic flows were Klingenberg, Takens \cite{KT} and Anosov \cite{An}. Using, Abraham's \cite{Ab} transversality theorems. Both were delicate computations and do not were suitable to obtain a Franks-type result. It was Contreras and Paternain \cite{CP}, the first ones to obtain Franks' lemma for geodesic flows, but for surfaces. Even so they managed to use this to obtain $C^2$-genericity of positive topological entropy for geodesic flows.

After many years, Contreras \cite{C} was able to obtain Franks' lemma for any dimension (geodesic flows), and thus proving the genericity of positive entropy in the general case.

The geodesic equation can be written like 

$$\left\{
\begin{array}{ccc}
\dot{x}&=&v,\\
\nabla_vv&=&0.
\end{array}\right.$$

It can be view like the Hamiltonian flow of the following Hamiltonian in $TM$

$$H(x,v)=\displaystyle\frac{1}{2}\left\langle v,v\right\rangle_x,$$

with the symplectic form

$$\omega_0(\cdot,\cdot)=\left\langle\left\langle  \cdot,\mathcal J\cdot \right\rangle\right\rangle,$$

where $\left\langle \left\langle \cdot,\cdot \right\rangle \right\rangle$ is the \textit{Sasaki metric} and $\mathcal J:TTM\to TTM$ is defined by 

$$\mathcal{J}_\theta=\left( \begin{array}{cc}	0 & I_n \\ -I_n & 0
\end{array}\right).$$

Another well known flows, induced from some physical phenomena, are the magnetic flow and  Gaussian's Thermostats. The first one can be written as the following equation,

$$\left\{
\begin{array}{ccc}
\dot{x}&=&v,\\
\nabla_vv&=&Y_x(v),
\end{array}\right.$$

where $Y$ is the so called Lorentz force. The second one, use a vector field $E$ as a thermostat,

$$\left\{
\begin{array}{ccc}
\dot{x}&=&v,\\
\nabla_vv&=&E-\frac{\left\langle E,v\right\rangle }{\left\langle v,v\right\rangle }v.
\end{array}\right.$$

The perturbation analysis was done for both flows, leading to new interest techniques. The Gaussian thermostat was studied by Latosinski \cite{L} in any dimension, but the magnetic flow was studied by Miranda \cite{M1, M2} but only in dimension two.

Our main results is to give a full proof of Kupka-Smale's theorem and Franks' lemma for magnetic flows in any dimension.

Our method differs to the previous ones, since we use new results from the Control Theory that was used by Rifford and Ruggiero \cite{RR}, to obtain similar results for Hamiltonians perturbations. 

Let us explain the differences. There are three objects in the analysis: the Hamiltonian (that is the kinetic force in the geodesic and magnetic flow), the symplectic form, and the metric. For geodesic flows \cite{C,CP}, the metric is perturbed. In Rifford and Ruggiero article \cite{RR}, the Hamiltonian is perturbed. In our article the symplectic form is perturbed. To obtain these results, one of the key arguments is to obtain nice coordinates to transform our analysis in a equation to apply the control theory.

Let us give more precise statements.

Let $M$ be a closed oriented manifold of dimension $m=n+1$ endowed with a Riemannian metric $g=\left\langle\cdot,\cdot\right\rangle $. Let $\pi:TM\rightarrow M$ denote the canonical projection defined on the tangent bundle. Let $\omega_0$ be the symplectic structure on $TM$ obtained by pulling back the canonical symplectic structure of the cotangent bundle $T^*M$ by the Riemannian metric. Let $H:TM\rightarrow M$ be the Hamiltonian given by

$$H(x,v)=\displaystyle\frac{1}{2}\left\langle v,v\right\rangle_x. \textrm{\ \ \ {\it (Kinetic Energy)}}.$$

Given a smooth closed 2-form $\Omega$ on $M$, we define $\omega=\omega(\Omega)=\omega_0+\pi^*\Omega$, a new symplectic form on $TM$ that is called the \textit{twisted symplectic structure}. The \textit{magnetic field} associated with $\Omega$ is the Hamiltonian field $X=X(\Omega)$ of the Hamiltonian $H$ with respect to $\omega(\Omega)$. The \textit{magnetic flow} associated with $\Omega$ is the Hamiltonian flow $\phi_t=\phi_t(\Omega):TM\rightarrow TM$ induced by the vector field $X$. Observe that this flow preserves the energy levels. This flow models the motion of a unit mass particle under the effect of the \textit{Lorentz force} $Y=Y(\Omega):TM\rightarrow TM$, that is the linear antisymmetric bundle map determined by

$$\Omega_x(u,v)=\left\langle Y_x(u),v\right\rangle _x,$$

for every $u,v\in T_xM$. A curve $t\mapsto (\gamma(t),\dot{\gamma}(t))\subset TM$ is an orbit of $\phi_t$ if and only if the projection $\gamma:\mathbb R\rightarrow M$ satisfies

\begin{eqnarray}\label{mag.geo}
	\dfrac{D}{dt}\dot{\gamma}=Y_{\gamma}(\dot{\gamma}).
\end{eqnarray}

Observe that if $\Omega\equiv 0$, the equation (\ref{mag.geo}) is the geodesic equation for the metric $g$. A curve that satisfies equation (\ref{mag.geo}) is called the \textit{$\Omega$-magnetic geodesic}.

Let $T^cM$ be the bunble given by $T^cM:=H^{-1}(c)$. Suppose that $\theta_t:=\phi_t^\Omega(\theta)$ is a periodic orbit with period $T>0$ in $T^cM$ at the point $\theta$. We say that the $\theta_t$ is \textit{non-degenerate} if the linearized Poincar\'e map $d_\theta P(\Omega)$ has no eigenvalues equal to 1. We say that $\theta_t$ is \textit{hyperbolic} if $d_\theta P(\Omega)$ has no eigenvalues with norm equal to 1 and that $\theta_t$ is \textit{elliptic} if all eigenvalues of $d_\theta P(\Omega)$ have norm one but are not roots of unity. For surfaces, a non-degenerate closed orbit is hyperbolic or elliptic.

Given two hyperbolic periodic orbits $\theta_t$ and $\eta_t$ of the magnetic flow $\phi^\Omega_t$, a \textit{heteroclinic orbits} from $\theta_t$ to $\eta_t$ is an orbit whose $\alpha$-limit is $\theta_t$ and its $\omega$-limit is $\eta_t$. The \textit{strong stable} and \textit{strong unstable manifolds} of the hyperbolic orbit $\theta_t$ at the $\theta\in\theta_t$ are defined as 

$$W^{ss}(\theta)=\{\sigma\in T^cM:\lim_{t\to \infty}d(\theta_t,\phi^\Omega_t(\sigma))=0\},$$

$$W^{us}(\theta)=\{\sigma\in T^cM:\lim_{t\to -\infty}d(\theta_t,\phi^\Omega_t(\sigma))=0\},$$

respectively. The \textit{(weak) stable} and \textit{(weak) unstable manifolds} of the hyperbolic periodic orbit $\theta_t$ are defined as 

$$W^s(\theta_t)=\bigcup_{t\in\mathbb{R}}\phi^\Omega_t(W^{ss}(\theta))\text{ and }W^u(\theta_t)=\bigcup_{t\in\mathbb{R}}\phi^\Omega_t(W^{us}(\theta)),$$

respectively. The sets $W^s(\theta_t)$ and $W^u(\theta_t)$ are $m$-dimensional $\phi^\Omega_t$-invariant immersed submanifolds of $T^cM$ and a heteroclinic orbit is an orbit in the intersection $W^s(\theta_t)\cap W^u(\eta_t)$. If $W^s(\theta_t)$ and $W^u(\eta_t)$ are transversal at $\phi^\Omega_t(\sigma)$, we say that the heteroclinic orbit is \textit{transversal}.

Let $\overline{\Omega}^2(M)$ be the set of all smooth closed 2-form on $M$ endowed with the $C^r$-topology. Recall that a subset $\mathcal{R}\subset\overline{\Omega}^2(M)$ is called a $C^r$-residual if it contains a countable intersection of open and dense subsets in the $C^r$-topology.

\begin{defn}
	We say that a property $P$ is $C^r$-generic for magnetic flows if, for each $c>0$, there exists a subset $\mathcal{R}(c)\subset\overline{\Omega}^2(M)$, such that the following holds.
	
	\begin{enumerate}
		\item The subset $\{\Omega\in\mathcal{R}(c):[\Omega]=[\widetilde{\Omega}]\}$ is $C^r$-residual in $\{\Omega\in\overline{\Omega}^2(M):[\Omega]=[\widetilde{\Omega}]\}$, for all $\widetilde{\Omega}\in\overline{\Omega}^2(M)$.
		\item The flow $\phi_t^\Omega|_{T^cM}$ has the property $P$, for all $\Omega\in\mathcal{R}(c)$.
	\end{enumerate}
\end{defn}

Our first result is the following.

\begin{thm}{(Kupka-Smale)}\label{mainthm}
	The following property:
	\begin{enumerate}
		\item all closed orbits are hyperbolic or elliptic,
		\item all heteroclinic points are transversal
	\end{enumerate}
	are $C^r$-generic for magnetic flows, with $1\leq r\leq\infty$.
\end{thm}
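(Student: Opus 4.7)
The plan is to follow the Baire category strategy of Contreras \cite{C} and Rifford--Ruggiero \cite{RR}, adapted to the present setting in which the perturbation acts on the closed $2$-form $\Omega$ subject to the cohomological constraint $[\Omega]=[\widetilde{\Omega}]$. Fix $c>0$ and $\widetilde{\Omega}$ once and for all; every admissible perturbation will be of the form $\Omega\mapsto\Omega+d\alpha$ for a smooth $1$-form $\alpha$ on $M$, so the cohomology class is automatically preserved.

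\textbf{Step 1 (Countable decomposition).} I would choose an increasing sequence $T_n\nearrow\infty$ and a countable basis $(U_i)$ of the topology of $T^cM$. Let
$$\mathcal{G}_n^{\mathrm{ND}}:=\{\Omega:\text{every closed orbit of }\phi_t^\Omega|_{T^cM}\text{ of period}\le T_n\text{ is non-degenerate}\},$$
and for each triple $(n,i,j)$ let $\mathcal{G}_{n,i,j}^{\mathrm{T}}$ be the set of $\Omega$ such that any two hyperbolic closed orbits of period $\le T_n$ meeting $\overline{U_i}$ and $\overline{U_j}$ have transversal heteroclinic intersections in $U_i\cap U_j$. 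The sought residual is $\bigcap_n\mathcal{G}_n^{\mathrm{ND}}\cap\bigcap_{n,i,j}\mathcal{G}_{n,i,j}^{\mathrm{T}}$, so it suffices to show each of these sets is open and dense in the affine subspace $\widetilde{\Omega}+d\Omega^1(M)$.

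\textbf{Step 2 (Openness).} For $\mathcal{G}_n^{\mathrm{ND}}$: since no eigenvalue of the Poincar\'e map is $1$, the implicit function theorem makes each periodic orbit persist as a smooth function of $\Omega$, and compactness of the period bound prevents new orbits from bifurcating in; continuity of the spectrum of $d_\theta P(\Omega)$ then gives openness. For $\mathcal{G}_{n,i,j}^{\mathrm{T}}$: under non-degeneracy the local strong stable and unstable manifolds depend continuously on $\Omega$ (Hadamard--Perron with parameters) and transversality is an open condition.

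\textbf{Step 3 (Density of $\mathcal{G}_n^{\mathrm{ND}}$ — the heart of the matter).} Given $\Omega$ and a degenerate closed orbit $\theta_t$ of period $T\le T_n$, let $\gamma=\pi\circ\theta_t$. In a tubular neighborhood of $\gamma$ I would construct a finite-dimensional family of $1$-forms $\alpha$ whose support is localised near $\gamma$; the perturbation $Y(d\alpha)$ of the Lorentz force produces an additive time-dependent term in the Jacobi-type equation governing the linearised flow along $\theta_t$. Writing this as a control system on $\mathrm{Sp}(2n)$ with control $Y(d\alpha)$, the geometric control-theoretic openness principle used in \cite{RR,LRR} shows that the endpoint map $\alpha\mapsto d_\theta P(\Omega+d\alpha)$ has image containing a neighborhood of $d_\theta P(\Omega)$. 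Since the non-degenerate symplectic matrices are open and dense in $\mathrm{Sp}(2n)$, density of $\mathcal{G}_n^{\mathrm{ND}}$ follows; a finite induction handles all orbits of period $\le T_n$ simultaneously.

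\textbf{Step 4 (Density of $\mathcal{G}_{n,i,j}^{\mathrm{T}}$).} Once Step 3 is available, the set of closed orbits of period $\le T_n$ is finite. Given hyperbolic $\theta_t,\eta_t$ and a candidate heteroclinic point $\sigma\in W^s(\theta_t)\cap W^u(\eta_t)$, I would apply the control-theoretic scheme a second time, now choosing the $1$-forms $\alpha$ to act on a compact piece of $W^u(\eta_t)$ away from the two orbits themselves; the reachability argument gives full rank for the map sending $\alpha$ to the relative position of the local invariant manifolds at $\sigma$. A standard Abraham-type transversality argument \cite{Ab} then provides density.

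\textbf{Main obstacle.} The crux is Step 3: the admissible perturbations are constrained to be \emph{exact} $2$-forms $d\alpha$, so the available directions for perturbing the Lorentz force are only the $Y(d\alpha)$ rather than arbitrary antisymmetric bundle maps. Proving that these nevertheless cover an open neighborhood of $d_\theta P(\Omega)$ in $\mathrm{Sp}(2n)$ will require selecting Darboux-type coordinates along $\gamma$ and verifying a bracket-generating (Lie rank) condition for the resulting control system in the spirit of \cite{RR}. Once this magnetic Franks' lemma is in hand, the Baire category argument sketched above yields Theorem~\ref{mainthm} routinely.
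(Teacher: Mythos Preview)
Your high-level strategy and the identification of the main obstacle (the bracket-generating condition for the control system attached to exact perturbations $d\alpha$) match the paper exactly. However, Step~3 as written has a genuine gap: the phrase ``a finite induction handles all orbits of period $\le T_n$'' presupposes that such orbits are finitely many, but for an arbitrary $\Omega$ a degenerate closed orbit may lie in a continuum of nearby closed orbits of comparable period, and your Franks-type perturbation --- which by design \emph{fixes} $\gamma$ and only alters the linearized Poincar\'e map along it --- cannot break such a continuum. The paper resolves this with a \emph{second, distinct} family of exact perturbations $d\eta_i$ that do \emph{not} vanish along $\gamma$ (Lemmas in Section~7 leading to Lemma~\ref{ev}): these move the endpoint $\phi_{T}^{\Omega+s\,d\eta_i}(\theta)$ through a basis of the transverse space $\mathcal N(\theta)$, so that the evaluation map $ev(\theta,t,\Omega)=(\theta,\phi_t^{\Omega}(\theta))$ is transverse to the diagonal of $T^cM\times T^cM$. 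Abraham's parametric transversality then produces a dense set of $\Omega$ in $\mathcal G^1(c,a)$, on which closed orbits of bounded period are isolated and finite; only \emph{after} this does the paper invoke the control-theoretic Franks' lemma (Lemma~\ref{arb.ck}, resting on Theorem~\ref{pert.} and Corollary~\ref{pert.corl.}) to upgrade each of the finitely many orbits to non-degenerate of every order $k$, which is what ``hyperbolic or elliptic'' (eigenvalues not roots of unity) actually requires --- a point your decomposition into $\mathcal G_n^{\mathrm{ND}}$ also omits.

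Your Step~4 is a genuinely different route from the paper. The paper does not reuse the control-theoretic scheme for heteroclinic transversality; instead it exploits the twist property of the vertical bundle (Lemma~\ref{vertical}) to ensure $\pi|_{W^u(\theta_t)}$ is a local diffeomorphism near a heteroclinic point, and then builds by hand, in Darboux coordinates on the energy level, a one-parameter family of Lagrangian submanifolds $\mathcal N_s\subset H^{-1}(c)$ realized by exact perturbations supported in a ball disjoint from both periodic orbits; Sard's theorem on the projection $\widehat y|_{\mathcal N_0}$ then forces transversality for a dense set of parameters. Your proposal to rerun the control argument on the relative position of the invariant leaves is plausible, but it would require its own reachability statement for the unstable leaf (a different object from the time-$T$ Poincar\'e map), and that is not supplied by the magnetic Franks' lemma you obtain in Step~3.
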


Let $c>0$, $\Omega\in\overline{\Omega}^2(M)$ and $\theta=(x,v)\in T^cM$ with $\gamma:[0,\tau]\to M$ denote a segment of $\Omega$-magnetic geodesic such that $\gamma(0)=x$ and $\dot{\gamma}(0)=v$, where $\tau>0$ small enough. 

Is known that the linearized Poincar\'e map $d_\theta P(\Omega)$ is a symplectic endomorphism of $\mathbb{R}^{2n}\times\mathbb{R}^{2n}$. We can identify
the set of all symplectic endomorphism of $\mathbb{R}^{2n}\times\mathbb{R}^{2n}$ with the symplectic group $Sp(n)$. Using Lemma \ref{coordenadas}, we obtain a coordinate system $(U,\psi)$ of $\gamma$. We define

$$\mathcal{F}=\{d\eta\in\Omega^2(M):supp(d\eta)\subset U\text{ and }d\eta=0\text{ in }\gamma\},$$

and
\begin{eqnarray*}
	S_{\tau,\theta}&:&\mathcal{F} \longrightarrow Sp(n),\\
	& &d\eta \longmapsto d_\theta P(\Omega+d\eta)(\tau).
\end{eqnarray*}

Our next result is the following.

\begin{thm}{(Franks' Lemma)}\label{franks}
Let $c>0$, $\Omega\in\overline{\Omega}^2(M)$ and $\mathcal{U}$ be an open neighborhood of $\Omega$ in the $C^r$ topology $(1\leq r)$. There is $\delta=\delta(c,\Omega,\mathcal{U})>0$ such that, for each $\theta\in T^cM$, $\tau>0$ small enough and $\mathcal{F}$ as defined above, the image of the set $(\mathcal{U}-\Omega)\cap\mathcal{F}$ under the map $S_{\tau,\theta}$ contains a ball of radius $\delta$ centered at $S_{\tau,\theta}(0)$ in $Sp(n)$. Moreover, if $\gamma(t)$ is a closed magnetic geodesic of minimal period $T$, then there is a neighborhood $V\subset M$ of $\gamma([\tau,T])$ such that the image of the set $(\mathcal{U}-\Omega)\cap\{d\eta\in\mathcal{F}:Supp(d\eta-\Omega)\subset U-V\}$ under the map $S_{\tau,\theta}$ contains a ball of radius $\delta$ centered at $S_{\tau,\theta}(0)$ in $Sp(n)$.
\end{thm}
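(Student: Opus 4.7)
The plan is to translate the problem into a linear control problem on the symplectic group $Sp(n)$ and then apply a geometric controllability criterion, following the strategy used by Rifford and Ruggiero \cite{RR} for Hamiltonian perturbations but adapted here to perturbations of the symplectic form rather than the Hamiltonian.

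First, I would use the chart $(U,\psi)$ provided by Lemma \ref{coordenadas}, in which the magnetic geodesic $\gamma$ is straightened and the twisted symplectic form $\omega_0+\pi^*\Omega$ takes a convenient normal form along $\gamma$. In these coordinates the variational equation of $\phi_t^{\Omega+d\eta}$ along $\gamma$ becomes a linear ODE $\dot{X}(t)=A_{d\eta}(t)\,X(t)$, $X(0)=I$, evolving in $Sp(n)$. Because $d\eta$ vanishes on $\gamma$ by definition of $\mathcal{F}$, the curve $\gamma$ remains an $(\Omega+d\eta)$-magnetic geodesic, and the difference $A_{d\eta}(t)-A_0(t)\in\mathfrak{sp}(n)$ depends linearly only on the first-order transverse jet of $d\eta$ along $\gamma$.

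The crucial step is to prove that the linear assignment from admissible first-order transverse jets of $d\eta$ along $\gamma$ (closed $2$-forms, vanishing on $\gamma$, supported in $U$) to $A_{d\eta}(t)-A_0(t)\in\mathfrak{sp}(n)$ is pointwise surjective in $t\in(0,\tau)$. Granted this, any prescribed time-dependent control $u\colon[0,\tau]\to\mathfrak{sp}(n)$ can be realized by some $d\eta\in\mathcal{F}$ through a bump-function interpolation, so the derivative of $S_{\tau,\theta}$ at $d\eta=0$ is onto $T_{S_{\tau,\theta}(0)}Sp(n)$. A quantitative inverse function theorem on the Lie group $Sp(n)$, combined with the $C^r$-bound coming from $\mathcal{U}$, then yields a ball of radius $\delta=\delta(c,\Omega,\mathcal{U})>0$ contained in the image of $(\mathcal{U}-\Omega)\cap\mathcal{F}$ under $S_{\tau,\theta}$.

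For the \emph{moreover} part, the minimality of the period $T$ guarantees that for $\tau>0$ small the sets $\gamma([0,\tau])$ and $\gamma([\tau,T])$ are disjoint and thus admit disjoint tubular neighborhoods in $M$. Shrinking $U$ to lie in a tube around $\gamma([0,\tau])$ disjoint from $\gamma([\tau,T])$ and choosing $V$ to be a tubular neighborhood of $\gamma([\tau,T])$, the construction above goes through verbatim with the additional support condition $\mathrm{Supp}(d\eta)\subset U\setminus V$ and produces the same ball in $Sp(n)$. The main obstacle I expect is the surjectivity claim: one must simultaneously arrange that $d\eta$ is closed, vanishes on $\gamma$, and realizes a prescribed element of $\mathfrak{sp}(n)$ through its transverse jet, and then verify that the resulting linear map into $\mathfrak{sp}(n)$ is surjective. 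This is a local linear algebra computation in the chart given by Lemma \ref{coordenadas}, but the antisymmetry of the $2$-form and the structure of $\mathfrak{sp}(n)$ require careful bookkeeping; this is precisely the place where the geometric control theory machinery is essential.
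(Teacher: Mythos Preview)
Your overall strategy (straighten $\gamma$ via Lemma~\ref{coordenadas}, view $S_{\tau,\theta}$ as the end-point map of a bilinear control system on $Sp(n)$, then invoke a controllability/open-mapping argument) is exactly the route the paper takes. The gap is in the step you flag as ``crucial'': you assert that the linear assignment $d\eta\mapsto A_{d\eta}(t)-A_0(t)$ is \emph{pointwise surjective onto} $\mathfrak{sp}(n)$. This is false. In the coordinates of Lemma~\ref{coordenadas} the only effect of $d\eta\in\mathcal F$ on the linearized flow is to replace the magnetic curvature $K^\Omega(t)$ by $K^\Omega(t)-U(t)$ for a symmetric $n\times n$ matrix $U(t)$ built from the transverse jet of $d\eta$; thus $A_{d\eta}(t)-A_0(t)$ always lies in the $n(n+1)/2$-dimensional subspace $\bigl\{\begin{smallmatrix}0&0\\ S&0\end{smallmatrix}\bigr\}$ with $S$ symmetric, far from spanning $\mathfrak{sp}(n)$ (dimension $n(2n+1)$). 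Consequently the differential of $S_{\tau,\theta}$ at $0$ need not be surjective, and an ordinary inverse function theorem argument does not yield the ball.

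What the paper actually does (Section~5, leading to Theorem~\ref{pert.}) is to feed this deficient family of controls $\mathcal E(ij)=\begin{smallmatrix}0&0\\ E(ij)&0\end{smallmatrix}$ into a \emph{second-order} controllability criterion (Proposition~\ref{seg.ord.}/\ref{parameters}): one computes $B^1_{ij}=[\mathcal E(ij),A]$, $B^2_{ij}=[[\mathcal E(ij),A],A]$ and the brackets $[B^1_{ij},B^1_{kl}]$, and checks that together they span $T_{I_{2n}}Sp(n)$. The brackets $[B^1_{ij},B^1_{kl}]$ supply exactly the missing $n(n-1)/2$ ``skew'' directions; since these are quadratic in the control, one only gets local openness with a square-root modulus ($\|u\|\le K|X-\overline X(T)|^{1/2}$), not a submersion. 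The paper explicitly remarks that first-order controllability can fail here. So your proposal needs to replace the pointwise-surjectivity claim and the inverse function theorem by this bracket computation and the second-order open-mapping result from geometric control theory; the rest of your outline (including the ``moreover'' part via disjoint tubular neighborhoods) then matches the paper.
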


The paper is developed as follows. In the Section 3, we present the special coordinates in a segment de magnetic geodesic and its magnetic matrix curvature. In the Section 4 are presented the results of geometric control theory will be used. Using the previous sections, we obtain the important perturbation theorem, which will be presented and proven in section 5. In the section 6 will prove Franks' Lemma for magnetic flows. Finally, in the sections 7 and 8 prove Kupka-Smale's Theorem for magnetic flows.

\begin{section}{Preliminaries}
We describe in this section our setting. Let us begin by fixing on $M$ a smooth Riemannian metric $g$ with Riemann curvature tensor $R$. Let $\pi:TM\rightarrow M$ denote the canonical projection an let $K:TTM\rightarrow TM$ denote the connection map. The latter is defined given its value on each fiber setting

$$K_\theta(\xi)=\dfrac{DZ}{dt}(0),$$

where $Z:(-\epsilon,\epsilon)\to TM$ verifies $Z(0)=\theta,Z'(0)=\xi$ and $D/dt$ denotes the covariant derivative along $\pi\circ Z$.

It is well know that $TTM$ splits as the direct sum of the vertical and the horizontal subbundles. The vertical fiber on $\theta$ is given by

$$V(\theta)=\ker d_\theta\pi,$$

and the horizontal fiber on $\theta$ is defined by 

$$H(\theta)=\ker K_\theta.$$

Thus $T_\theta TM$ can be identified with $T_{\pi(\theta)}M\oplus T_{\pi(\theta)}M$ and hence we write in the sequel 

$$\xi=(\xi_1,\xi_2),$$

where $\xi_1=\xi_h=d_\theta\pi(\xi)$ and $\xi_2=\xi_v=K_\theta(\xi)$ for every $\xi$ in $T_\theta TM$. The structure symplectic $\omega_0$ described in the introduction can be written as $(\theta=(x,v))$

$$(\omega_0)_{\theta}(\xi,\eta)=\left\langle \xi_1,\eta_2\right\rangle _x-\left\langle \xi_2,\eta_1\right\rangle _x,$$

where $\xi,\eta\in T_\theta TM$. Also can write as

$$(\omega_0)_\theta(\xi,\eta)=\left\langle\left\langle  \xi,\mathcal J_\theta\cdot\eta \right\rangle\right\rangle _\theta,$$
	
where $\left\langle \left\langle \xi,\eta \right\rangle \right\rangle_\theta=\left\langle \xi_1,\eta_1\right\rangle_x+\left\langle \xi_2,\eta_2\right\rangle_x$ called \textit{Sasaki metric} and $\mathcal J:TTM\to TTM$ define by $\mathcal J_\theta(X,Y)=(Y,-X)$.

Fix a closed 2-form $\Omega$ in $M$ define $\omega=\omega(\Omega)=\omega_0+\pi^*\Omega$. Let $Y:TM\to TM$ be the bundle map such that 

$$\Omega_x(u,v)=\left\langle Y_x(u),v\right\rangle_x,$$

for every $u,v\in T_x M$. Remember that $X=X(\Omega)$ denote the symplectic gradient of $H$ with respect to $\omega$. Since the identity

$$d_\theta H(\xi)=\omega_\theta(X(\theta),\xi)=(\omega_0)_\theta(X(\theta),\xi)+\left\langle Y_{\pi(\theta)}\cdot d_\theta\pi(X(\theta)),d_\theta\pi(\xi)\right\rangle _x,$$
	
holds for every $\xi\in T_\theta TM$, the identity $\theta=(x,v)$, where $v\in T_xM$ and 

$$\left\langle \xi_2,v\right\rangle _x=\left\langle X_1(\theta),\xi_2\right\rangle _x-\left\langle X_2(\theta),\xi_1\right\rangle _x+\left\langle Y_x( X_1(\theta)),\xi_1\right\rangle _x,$$

is valid for every $\xi_1,\xi_2\in T_xM$ (obviously we made use of the identification $\xi=(\xi_1,\xi_2)$ as it was explained before and $(X_1,X_2)$ are the horizontal and vertical components of $X$). Therefore

$$X(\theta)=X(x,v)=(v,Y_x(v))\in H(\theta)\oplus V(\theta),$$

for every $\theta=(x,v)\in TM$. It is easily seen from this equation that a curve is an integral curve of $X$ if and only if it is of the form $t\mapsto (\gamma(t),\dot{\gamma}(t))\in TM$ and satisfies the equation 

$$\dfrac{D}{dt}\dot{\gamma}=Y_{\gamma}(\dot{\gamma}),$$ 

which is nothing but Newton's law of motion. Moreover 

\begin{eqnarray}\label{fieldcont}
	\Omega\in\overline{\Omega}^2(M)\mapsto X^\Omega\in\mathcal{X}^r(T^cM),
\end{eqnarray}

is continuous and injective.

Let us derive the Jacobi equation. Denote by $\phi_t=\phi_t^\Omega:TM\to TM$ the flow generated by the symplectic gradient $X$. Take a curve $Z:(-\epsilon,\epsilon)\to TM$ with $Z(0)=\theta, Z'(0)=\xi\in T_\theta TM$ and consider the variation $f(s,t)=\pi(\phi_t(Z(s)))$. Set $J_\xi(t):=\dfrac{\partial f}{\partial s}(0,t)$, $\gamma_s(t):=f(s,t)$ and $\gamma_0=\gamma$.

From the well know identity:

$$\dfrac{D}{ds}\frac{D}{dt}\frac{\partial f}{\partial s}=\dfrac{D}{dt}\frac{D}{dt}\frac{\partial f}{\partial s}+R\left(\dfrac{\partial f}{\partial t},\dfrac{\partial f}{\partial s}\right) \dfrac{\partial f}{\partial t},$$

and

$$\dfrac{D}{dt}\dot{\gamma}_s=Y_{\gamma_s}(\dot{\gamma}_s),$$

denote $J'$ by $\frac{D}{dt}J$, we obtain

$$J'_\xi+R(\dot{\gamma},J_\xi)\dot{\gamma}=\dfrac{D}{ds}\left( Y_{\gamma_s}(\dot{\gamma}_s)\right) .$$

Note that the map $(x,v)\mapsto Y_x(v)$ is a (1,1)-tensor. Thus using the covariant derivative $\nabla$ on (1,1)-tensor induced by the Riemannian connection we obtain

$$\frac{D}{ds}Y(\dot{\gamma}_s)=(\nabla_{J_\xi}Y)(\dot{\gamma}_s)+Y(J'_\xi),$$

and we deduce the Jacobi equation

$$J''_\xi+R(\dot{\gamma},J_\xi)\dot{\gamma}-(\nabla_{J_\xi}Y)(\dot{\gamma})-Y(J'_\xi)=0.$$

Computing the horizontal and vertical components of the differential of the magnetic field, we obtain

$$d_\theta\phi_t^\Omega(\xi)=\left(J_\xi(t),J'_\xi(t)\right),$$

for every $\xi\in T_\theta T^cM$. In particular, of (\ref{fieldcont}), the derivative of the magnetic flow $d_\theta\phi_t^\Omega$ depends continuously on $\Omega\in\overline{\Omega}^2(M)$.

We say that $J$ is a \textit{Jacobi field} under $\Omega$ along $\gamma$ if hold

\begin{eqnarray}\label{jacobi}
\frac{D^2}{d^2t}J(t)+R(\dot{\gamma}(t),J(t))\dot{\gamma}(t)-(\nabla_JY)(\dot{\gamma}(t))-Y\left( \frac{D}{dt}J(t)\right) =0.
\end{eqnarray}

We recall two important equations satisfied by the Lorentz force $Y$. Since for each $x\in M$, the map $Y_x:T_xM\to T_xM$ is antisymmetric with respect to the Riemannian metric, we have that for every $u,v,w\in T_xM$

$$\left\langle(\nabla Y)(u,v),w\right\rangle _x+\left\langle u,(\nabla Y)(w,v)\right\rangle _x=0.$$

Also since $\Omega$ is a closed form one easily checks that for every $u,v,w\in T_xM$ we have

$$\left\langle (\nabla Y)(u,v),w\right\rangle _x+\left\langle (\nabla Y)(v,w),u\right\rangle _x+\left\langle (\nabla Y)(w,u),v\right\rangle _x=0.$$

Then we have that

\begin{eqnarray*}
	\frac{d}{dt}\left\langle J',\dot{\gamma}\right\rangle &=&\left\langle J'',\dot{\gamma}\right\rangle +\left\langle J',Y(\dot{\gamma})\right\rangle \\
	&=&\left\langle -R(\dot{\gamma},J)\dot{\gamma}+(\nabla_JY)(\dot{\gamma})+Y(J'),\dot{\gamma}\right\rangle -\left\langle Y(J'),\dot{\gamma})\right\rangle \\
	&=&\left\langle (\nabla_JY)(\dot{\gamma}),\dot{\gamma}\right\rangle \\
	&=&0.
\end{eqnarray*}

Thus $\left\langle J',\dot{\gamma}\right\rangle $ it is constant, we consider always zero.

Note that if $c>0$ then the vector field $X$ has no singularities in $T^cM$. To simplify the notation, we still denote by $\phi_t$ the restriction of the magnetic flow to the energy level $T^cM.$

We denote by $i(M,g)$ injectivity radius of $(M,g)$ and  for $\Omega$ a smooth closed 2-form in $M$, since for $x\in M$, $\Omega_x:T_xM\times T_xM\to \mathbb{R}$,  

$$\|\Omega_x\|:=\sup\{|\Omega_x(u,v)|:u,v\in T_xM\text{ with }\|u\|=\|v\|=1\},$$

then $|\Omega_x(u,v)|\leq \|\Omega_x\|\|u\|\|v\|$ and

$$\|\Omega\|_{C^0}:=\sup_{x\in M}\|\Omega_x\|.$$

\begin{lem}\label{rad.inj}
	Given $c>0$ and $\Omega\in\overline{\Omega}^2(M)$, let $K=K(c,\Omega)\in\mathbb{R}$ be defined as $K=\min\{1/(\|\Omega\|_{C^0}+1)^2,i(M,g)/2c\}$. Then $\pi\circ\phi^\Omega_t(\theta):[0,K)\to M$ is injective, for every $\theta\in T^cM$.
\end{lem}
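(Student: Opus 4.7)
The plan is to argue by contradiction. Set $\gamma(t):=\pi\circ\phi_t^\Omega(\theta)$ and suppose that $\gamma(t_1)=\gamma(t_2)$ for some $0\le t_1<t_2<K$. Replacing $\theta$ by $\phi_{t_1}^\Omega(\theta)$ and writing $T:=t_2-t_1$, we may assume $t_1=0$ and set $T\in(0,K)$, so $\gamma(0)=\gamma(T)=:p$. Conservation of energy gives $\|\dot\gamma(t)\|\equiv\sqrt{2c}$.

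Next, I would work in normal coordinates at $p$, using that $\exp_p\colon B_{i(M,g)}(0)\subset T_pM\to M$ is a diffeomorphism. The two pieces of $K$ serve to keep the lift $\tilde\gamma(t):=\exp_p^{-1}(\gamma(t))$ inside this ball: the bound $T<i(M,g)/2c$ controls the arc length $\sqrt{2c}\,T$ travelled by $\gamma$, while $T<1/(\|\Omega\|_{C^0}+1)^2$ controls the transverse drift produced by the Lorentz force. In these coordinates the magnetic geodesic equation reads
$$\ddot{\tilde\gamma}^k+\Gamma_{ij}^k(\tilde\gamma)\,\dot{\tilde\gamma}^i\dot{\tilde\gamma}^j=Y_\ell^k(\tilde\gamma)\,\dot{\tilde\gamma}^\ell,$$
with $\Gamma_{ij}^k(0)=0$, so $|\Gamma_{ij}^k(q)|\le C_g|q|$ on a neighborhood of $0$, and with $|Y_\ell^k(q)w^\ell|\le\|\Omega\|_{C^0}|w|$.

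Writing $\tilde\gamma(t)=t\,\dot{\tilde\gamma}(0)+E(t)$ with $E(0)=\dot E(0)=0$, a direct Gr\"onwall argument applied to the ODE above gives an estimate of the form
$$|E(t)|\le C\bigl(\|\Omega\|_{C^0}\sqrt{2c}\,t^2+c^{3/2}t^3\bigr),\qquad t\in[0,T],$$
where $C$ depends only on $(M,g)$. The assumption $\tilde\gamma(T)=0$ then forces $T\sqrt{2c}=|T\dot{\tilde\gamma}(0)|=|E(T)|$, i.e.\ $1\le C\|\Omega\|_{C^0}T+C'cT^2$ after dividing through by $T\sqrt{2c}$. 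Using $T<1/(\|\Omega\|_{C^0}+1)^2$ makes the first term on the right strictly less than $\tfrac12$, and $T<i(M,g)/2c$ makes the second term small as well (possibly after absorbing $C,C'$ into the constants implicit in $K$), yielding the contradiction $1<1$.

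The main obstacle is the quantitative bookkeeping inside the Gr\"onwall step: one has to track how the Christoffel symbols (controlled by $|q|$ and hence by $c$ through the arc-length bound) interact with the Lorentz-force term (controlled by $\|\Omega\|_{C^0}$), and then arrange the bounds so that the specific combination $\min\{(\|\Omega\|_{C^0}+1)^{-2},\,i(M,g)/2c\}$ emerges. The estimates are deliberately crude — the exponent $2$ on $(\|\Omega\|_{C^0}+1)$ is generous, which is what allows a single clean expression for $K$ that works simultaneously for confinement to the normal chart and for non-vanishing of $\tilde\gamma(T)$.
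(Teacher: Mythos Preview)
The paper does not include a proof of this lemma; it simply refers the reader to Lemma~2.1 of Miranda~\cite{M1}. So there is no argument in the present paper to compare yours against.

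On its own merits, your outline has the right shape but a real quantitative gap at the end. Your Gr\"onwall step yields $1\le C\|\Omega\|_{C^0}T+C'cT^2$ with constants $C,C'$ coming from the bound $|\Gamma^k_{ij}(q)|\le C_g|q|$ in normal coordinates; these depend on the sectional curvature of $(M,g)$, not merely on $i(M,g)$. The $K$ in the statement, however, is the \emph{explicit} number $\min\{(\|\Omega\|_{C^0}+1)^{-2},\,i(M,g)/2c\}$, with no hidden geometric constants. Your phrase ``possibly after absorbing $C,C'$ into the constants implicit in $K$'' therefore cannot be executed: there is nothing implicit in $K$ to absorb into. As written, your argument proves injectivity only on $[0,K')$ for some $K'$ that also depends on a curvature bound, which is strictly weaker than the lemma.

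There is a second, related issue with confinement to the normal chart. To define $\tilde\gamma=\exp_p^{-1}\circ\gamma$ on $[0,T]$ you need $d(p,\gamma(t))<i(M,g)$ throughout. Arc length gives $d(p,\gamma(t))\le\sqrt{2c}\,t$, and from $K\le i(M,g)/2c$ you obtain only $\sqrt{2c}\,K\le i(M,g)/\sqrt{2c}$, which is below $i(M,g)$ only when $c>1/2$; the other bound $K\le 1$ gives $\sqrt{2c}\,K\le\sqrt{2c}$, helpful only when $\sqrt{2c}<i(M,g)$. In the regime $i(M,g)^2<2c\le 1$ neither inequality forces the curve to stay in the normal ball, so $\tilde\gamma$ may not even be defined. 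If you want to recover the exact $K$ of the statement you will likely need a more intrinsic comparison argument, using $\|\nabla_{\dot\gamma}\dot\gamma\|\le\|\Omega\|_{C^0}\sqrt{2c}$ directly, rather than raw coordinate estimates.
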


The proof of this Lemma is equal to the Lemma 2.1 of Miranda \cite{M1}. The $K(c,\Omega)$  will be called the magnetic injectivity radius.

\end{section}

\begin{section}{Special coordinates and Magnetic curvature}
	
In this section we define the coordinates special type of Fermi coordinates, we obtain a coordinate system of a piece of magnetic geodesic where we present the magnetic curvature matrix. The main reference here is Gouda \cite{G}.

Let us consider $c>0$, $\Omega\in\overline{\Omega}^2(M)$, $\theta=(x,v)$ with $H(\theta)=c$ and $\gamma$ a $\Omega$-magnetic geodesic such that $\gamma(0)=x$ and $\dot{\gamma}(0)=v$. 
	
Let $\theta_t=\phi^\Omega_t(\theta)=(\gamma(t),\dot{\gamma}(t))$ be a periodic orbit of period $T$ in $T^cM$. Let $\Sigma\subset T^cM$ be a local transversal section in the energy level $T^cM$ at the point $\theta$. We say that $\theta_t$ is \textit{non-degenerate} if the linearized Poincar\'e map $d_\theta P(\Omega):T_\theta \Sigma\to T_\theta \Sigma$ has no eigenvalue equalto 1. The linearized Poincar\'e map is a linear symplectic mapping. Let $\delta\Omega\in\overline{\Omega}^2(M)$ such that $(\delta\Omega)_{\gamma(t)}=0$ for every $t\in[0,T]$, then $\phi^{\Omega+\delta\Omega}_t$ preserves the closed orbit $\theta_t$ and its energy level. If $\delta\Omega$ is small enough in a neighborhood of $\gamma([0,T])$, the Poincar\'e return map $ P(\Omega+\delta\Omega):\Sigma\to \Sigma$ associated to the magnetic flow of $\Omega+\delta\Omega$ in $T^cM$ and its differential $d_\theta P(\Omega+\delta\Omega):T_\theta\Sigma\to T_\theta\Sigma$ are well-defined. Our aim is to show that the set of $d_\theta P(\Omega+\delta\Omega)$ for $\delta\Omega$ as above small enough contains as open subset of the set of linear symplectic matrices from $T_\theta\Sigma$ onto itself.	
	
Let $v_1:=v/\sqrt{2c}$ and let us choose $v_2,\ldots,v_m\in T_x M$ so that $v_1,v_2,\ldots,v_m$ is an orthonormal basis in $T_x M$. We define a vector field $V_i$ along $\gamma$ as a solution of the differential equation

$$\left\{
\begin{array}{lc}
	\displaystyle\frac{D}{dt}V_i=Y(V_i),\\
	\\
	V_i(0)=v_i.
\end{array}\right.$$

In particular $V_1=\dot{\gamma}/\sqrt{2c}$. Note that

\begin{eqnarray*}
	\frac{d}{dt}\left\langle V_i,V_j\right\rangle &=&\left\langle V'_i,V_j\right\rangle +\left\langle V_i,V'_j\right\rangle  \\
	&=&\left\langle Y(V_i),V_j\right\rangle +\left\langle V_i,Y(V_j)\right\rangle \\
	&=&\left\langle Y(V_i),V_j\right\rangle -\left\langle Y(V_i), V_j\right\rangle \\
	&=&0.
\end{eqnarray*}

Thus $V_1,\cdots,V_m$ are orthonormal vector fields along $\gamma$ (type Fermi coordinates). 

We know that $Y_x:T_xM\to T_xM$ is an antisymmetric linear mapping for each $x\in M$. Define $Pr_\theta:T_xM\to v^\bot$ the map natural projection, where $v^\bot=\{u\in T_xM:\left\langle u,v\right\rangle _x=0\}$, also define $(Y_\bot)_x:T_xM\to T_xM$ as $(Y_\bot)_x=Pr_\theta Y_x Pr_\theta$, is clear that $(Y_\bot)_x(v)=0$, $(Y_\bot)_x(v^\bot)\subset v^\bot$ and $(Y_\bot)_x^*=-(Y_\bot)_x$ since that $(Pr_\theta)^*=Pr_\theta$. Let $0<\tau<K(c,\Omega)$, for each $t\in[0,\tau]$, define $P_t:T_{\gamma(t)}M\to T_{\gamma(t)}M$ as 

$$P_t=\exp\left(\frac{1}{2}\int_0^t(Y_\bot)_{\gamma(s)}ds\right),$$

it is clear that this map is a linear isomorphism and $P_t^{-1}=P_t^*$ this is an orthogonal linear map then $e_1(t):=P_t^{-1}V_1(t),\ldots,e_m(t):=P_t^{-1}V_m(t)$ is an orthonormal basis of $T_{\gamma(t)}M$.

Consider the differentiable map $\Phi:[0,\tau
]\times \mathbb{R}^n\to M$ given by

$$\Phi(x_1,x_2,\ldots,x_m)=\exp_{\gamma(x_1)}\left[\sum_{i=1}^{m}x_ie_i(x_1)\right],$$

where $\exp_x:T_xM\to M$ denotes the Riemannian exponential map. This map has maximal rank at $(x_1,0,\ldots,0)$, $x_1\in[0,\tau]$. Since $\gamma(t)$ has no self-intersections on $t\in[0,\tau]$, there exists a neighborhood $V$ of $[a,b]\times\{0\}$. Then $\psi^{-1}:=\Phi|_V$ is a diffeomorphism, if $U:=\Phi(V)$ then $(U,\psi)$ is a local coordinate chart where $\gamma(t)=(t,0)$, $g_{ij}(t,0)=\delta_{ij}$ and the Christoffel symbols are $\Gamma^k_{ij}(t,0)=0$. Let 

$$\overline{Y}_{ij}(t):=\left\langle Y_{\gamma(t)}(V_i(t)),V_j(t)\right\rangle_{\gamma(t)} \text{ and }Y_{ij}(t):=\left\langle Y_{\gamma(t)}(e_i(t)),e_j(t)\right\rangle_{\gamma((t))} ,$$ 

denote $\overline{Y}(t)=(\overline{Y}_{ij}(t))$ and $Y(t)=(Y_{ij}(t))$ are the matrices representations of $Y_{\gamma(t)}$ at coordinates $V_i(t)$ and $e_i(t)$ respectively. Thus we have that 

$$Y(t)=P_t^{-1}\overline{Y}(t)P_t.$$ 

In these coordinates note that $e_1(t)=V_1(t)$ 
since $(\overline{Y}_\bot)_{\gamma(t)}$ has zeros in the first column and first row. Moreover note that $P'_t=\displaystyle\frac{1}{2}P_t(Y_\bot)_{\gamma(t)}$.

Let $J$ be a Jacobi field under $\Omega$ along $\gamma$ such that $\displaystyle\frac{D}{dt}J$ is orthogonal to $\dot{\gamma}$. Let $J$ expressed as $J=\displaystyle\sum_{j=1}^mf_je_j$ where each $f_j$ is a smooth function along $\gamma$. Then

\begin{eqnarray*}
	J'&=&\sum_{j-1}^m\left(f'_je_j+f_je'_j\right)\\
	J''&=&\sum_{j-1}^m\left( f''_je_j+2f'_je'_j+f_je''_j \right),
\end{eqnarray*}

but $e'_1=V'_1=\overline{Y}(V_1)=Y(e_1)$ and for $j=2,\ldots m$ we have that

\begin{eqnarray*}
	e'_j&=&(P^{-1})'V_j+P^{-1}V'_j\\
	&=&P^{-1}\overline{Y}(V_j)-\frac{1}{2}P^{-1}\overline{Y}_\bot V_j\\
	&=&P^{-1}\overline{Y}P(e_j)-\frac{1}{2}P^{-1}\overline{Y}_\bot P(e_j)\\
	&=&Y(e_j)-\frac{1}{2}Y_\bot(e_j),
\end{eqnarray*}

observe that, since $Y_\bot(e_1)=0$ then $e'_j=Y(e_j)-\frac{1}{2}Y_\bot(e_j)$ for all $j=1,2,\ldots,m$. Also have that for all $j=1,2,\ldots,m$

\begin{eqnarray*}
	e''_j&=&\nabla_{\gamma'}(Y(e_j))-\frac{1}{2}(Y_\bot(e_j))'\\
	&=&(\nabla_{\gamma'}Y)(e_j)+Y(e'_j)-\frac{1}{2}Y'_\bot(e_j)-\frac{1}{2}Y_\bot(e'_j)\\
	&=&(\nabla_{\gamma'}Y)(e_j)+Y(e'_j)-\frac{1}{2}Y'_\bot(e_j)-\frac{1}{2}Y_\bot Y(e_j)+\frac{1}{4}Y^2_\bot(e_j).
\end{eqnarray*}

Since $J$ is a Jacobi field, this satisfies a equation (\ref{jacobi}) then

\begin{eqnarray*}
	J''+\sum^m_{j=1}\left(f_jR(\gamma',e_j)\gamma'-f_j(\nabla_{e_j}Y)(\gamma')-f'_jY(e_j)-f_jY(e'_j)\right)&=&0,\\ 
	\sum^m_{j=1}\left\lbrace f''_je_j+f'_j(Y-Y_\bot)(e_j)+f_j[R(\gamma',e_j)\gamma'+\right. \\
	\left. \left. (\nabla_{\gamma'}Y)(e_j)-(\nabla_{e_j}Y)(\gamma')-\frac{1}{2}Y'_\bot(e_j)-\frac{1}{2}Y_\bot Y(e_j)+\frac{1}{4}Y^2_\bot(e_j)\right] \right\rbrace &=&0,
\end{eqnarray*}

denote by

\begin{eqnarray*}
	R_{ij}&:=&\left\langle R(\gamma',e_i)\gamma',e_j\right\rangle =\left\langle R(\gamma',e_j)\gamma',e_i\right\rangle =\left\langle e_i,R(\gamma',e_j)\gamma'\right\rangle ,\\
	(Y')_{ij}&:=&\left\langle Y'(e_i),e_j\right\rangle =\left\langle (\nabla_{\gamma'}Y)(e_i),e_j\right\rangle =-\left\langle e_i,(\nabla_{\gamma'}Y)e_j\right\rangle ,\\
	(\partial Y)_{ij}&:=&\sqrt{2c}(\nabla_{e_j}Y)_{i1}=\sqrt{2c}\left\langle (\nabla_{e_j}Y)(e_i),e_1\right\rangle =-\left\langle e_i,(\nabla_{e_j}Y)(\gamma')\right\rangle ,\\
	(Y'_\bot)_{ij}&:=&\left\langle Y'_\bot(e_i),e_j\right\rangle =-\left\langle e_i,Y'_\bot(e_j)\right\rangle ,\\
	(YY_\bot)_{ij}&:=&\left\langle YY_\bot(e_i),e_j\right\rangle =\left\langle e_i,Y_\bot Y(e_j)\right\rangle ,\\
	(Y^2_\bot)_{ij}&:=&\left\langle Y^2_\bot(e_i),e_j\right\rangle =\left\langle e_i,Y^2_\bot(e_j)\right\rangle .
\end{eqnarray*}

Note que $(Y_\bot Y)_{ij}=(YY_\bot)_{ij}$ for all $i,j=2,\ldots,m$, moreover $(YY_\bot)_{1j}=0$ for all $j=1,2,\ldots,m$ and $(YY_\bot)^*=Y_\bot Y$. Thus we have that, if $f=(f_1,f_2,\ldots,f_m)$, then

\begin{eqnarray*} 
	f''+(Y_\bot-Y)f'+(R+\partial Y-Y'+\frac{1}{2}Y'_\bot+\frac{1}{2}YY_\bot+\frac{1}{4}Y^2_\bot)f&=&0.
\end{eqnarray*}

The first line of the above equation is written as

\begin{eqnarray*}
	f''_1+\sum_{j=1}^m(-Y_{1j}f'_j-Y_{1j}f'_j)&=&0,\\
	\frac{d}{dt}(f'_1-\sum_{j=2}^mY_{1j}f_j)&=&0.
\end{eqnarray*}

Since $Y_{11}=0$ and $\left\langle J',\gamma'\right\rangle =0$ then $f'_1=\displaystyle\sum_{j=2}^mY_{1j}f_j$. For $i\neq 1$ we have that

\begin{eqnarray*}
	f''_i-Y_{i1}f'_1+\sum_{j=2}^{m}\left(R_{ij}+(\partial Y)_{ij}-Y'_{ij}+\dfrac{1}{2}(Y'_\bot)_{ij}+\right. \\
	\left. \frac{1}{2}(YY_\bot)_{ij}+\dfrac{1}{4}(Y_\bot^2)_{ij}\right)f_j&=&0,\\
	f''_i+\sum_{j=2}^{m}(R_{ij}+(\partial Y)_{ij}-\frac{1}{2}Y'_{ij}+\frac{3}{4}(Y^2)_{ij}-Y_{i1}Y_{1j})f_j&=&0, 
\end{eqnarray*}

since $Y_{ij}=(Y_\bot)_{ij}$ for all $i,j=2,3,\ldots,m$, and if we denote $\widetilde{Y}_{ij}:=Y_{i1}Y_{1j}$, then the new equation is

\begin{eqnarray}\label{eqf}
	f''+\left( R+\partial Y-\frac{1}{2}Y'+\frac{3}{4}Y^2-\widetilde{Y}\right) f&=&0.
\end{eqnarray}

Note here the matrices are of order $n\times n$, also that $R$, $Y^2$ and $\widetilde{Y}$ are symmetric matrices, also see that $\partial Y-\frac{1}{2}Y'$ is a symmetric matrix, for $i,j=2,3,\ldots,m$ we have that 

$$(\partial Y-\frac{1}{2}Y'_{ij})=\left\langle (\nabla_{e_j}Y)(e_i),\gamma'\right\rangle -\frac{1}{2}\left\langle Y'(e_i),e_j\right\rangle ,$$

then

\begin{eqnarray*}
	(\partial Y-\frac{1}{2}Y')_{ij}-(\partial Y-\frac{1}{2}Y')_{ji}
	=\left\langle (\nabla Y)(e_i,e_j),\gamma'\right\rangle +\\
	\left\langle (\nabla Y)(\gamma',e_i),e_j\right\rangle +\left\langle (\nabla Y)(e_j,\gamma'),e_i\right\rangle=0,
\end{eqnarray*}

since $\Omega$ is closed $(d\Omega=0)$. On the other hand, as $Y_{ij}=\left\langle Y(e_i),e_j\right\rangle =\Omega(e_i,e_j)$ and $Y=\Omega$ seen as matrix. So we define the matrix \textit{magnetic curvature of} $\Omega$ as

\begin{eqnarray}
	K^\Omega(t):=R_{\gamma(t)}+\partial\Omega_{\gamma(t)}-\frac{1}{2}\Omega'_{\gamma(t)}+\frac{3}{4}\Omega^2_{\gamma(t)}-\widetilde{\Omega}_{\gamma(t)},
\end{eqnarray}

it is a symmetric ($n\times n$)-matrix, then

$$f''+K^\Omega f=0.$$

We shall study the real $(n\times n)$-matrix differential equation along $\gamma$,

\begin{eqnarray}
	X''+K^\Omega X=0.
\end{eqnarray}

It is equivalent to 

$$
\left( \begin{array}{c}
X\\X'
\end{array}\right)'=
\left( \begin{array}{cc}
0 & I \\
-K^\Omega & 0
\end{array}\right)
\left(  \begin{array}{c}
X\\X'
\end{array}\right)  
$$

Let $W=\left( \begin{array}{c}
X\\X'
\end{array}\right)$, then

\begin{eqnarray}\label{jac.mat.}
	W'(t)=\left(\begin{array}{cc}
	0 & I \\
	-K^\Omega(t) & 0
	\end{array} \right)W(t).
\end{eqnarray}

Thus, finally we have that 

\begin{lem}\label{coordenadas}
	Let $\theta([0,\tau])$ be a nonsingular orbit of the magnetic flow of $\Omega$ without self-intersection. Exists a local coordinate chart $(U,\psi)$, such that $\psi=(x_1=t,x_2,\ldots,x_m)$, $\psi(x)=\psi(\pi(\theta))=0$ and $\gamma(t)=(t,0,\ldots,0)$, satisfying (\ref{jac.mat.}), where the matrix $W(t)$ represents a basis of Jacobi fields and its derivatives defined in the orbit, and the matrix $K^\Omega(t)$ represents the magnetic curvature.
\end{lem}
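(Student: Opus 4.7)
The plan is to package the construction carried out explicitly in the preceding pages into a single assembled chart, and then read off the matrix Jacobi equation from the computation already performed. All the ingredients are in place; the lemma is essentially a consolidation.

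First, I would fix the orthonormal basis $v_1 = v/\sqrt{2c}, v_2, \ldots, v_m$ of $T_xM$ and propagate it along $\gamma$ by $DV_i/dt = Y(V_i)$, obtaining an orthonormal frame $\{V_i(t)\}$ along $\gamma$ (orthonormality was verified earlier, using antisymmetry of $Y$). Then set $e_i(t) = P_t^{-1}V_i(t)$ with $P_t = \exp\bigl(\tfrac{1}{2}\int_0^t (Y_\perp)_{\gamma(s)}\,ds\bigr)$. Since $P_t$ is orthogonal, $\{e_i(t)\}$ is again orthonormal; since $(Y_\perp)_{\gamma(s)}$ annihilates $V_1$, we retain $e_1(t) = V_1(t) = \dot{\gamma}(t)/\sqrt{2c}$.

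Next, I would take the map
\[
\Phi(x_1, x_2, \ldots, x_m) = \exp_{\gamma(x_1)}\Bigl[\sum_{i=2}^{m} x_i\, e_i(x_1)\Bigr]
\]
and show that it defines the chart. A direct computation of $d\Phi$ at a point $(x_1, 0, \ldots, 0)$ gives $\partial_{x_1} \mapsto \dot{\gamma}(x_1)$ and $\partial_{x_i} \mapsto e_i(x_1)$ for $i \geq 2$ (using $d_0 \exp_{\gamma(x_1)} = \mathrm{Id}$), so $\Phi$ has maximal rank all along $[0,\tau] \times \{0\}$. Combined with the hypothesis that $\gamma|_{[0,\tau]}$ has no self-intersection (guaranteed by Lemma \ref{rad.inj} when $\tau < K(c,\Omega)$), a standard tubular-neighbourhood argument produces an open set $V \supset [0,\tau] \times \{0\}$ on which $\Phi$ is a diffeomorphism. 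Setting $U = \Phi(V)$, $\psi = (\Phi|_V)^{-1}$, the axial curve satisfies $\gamma(t) = \psi^{-1}(t, 0, \ldots, 0)$, and since $\partial_{x_i}|_{\gamma(t)} = e_i(t)$ is orthonormal, $g_{ij}(t,0) = \delta_{ij}$. The Christoffel symbols $\Gamma^k_{ij}(t,0)$ vanish because each coordinate ray $s \mapsto s\, e_i(x_1)$ is a geodesic by definition of $\exp$, so radial geodesics through $\gamma(x_1)$ have coordinate components linear in $s$.

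Finally, the matrix equation (\ref{jac.mat.}) is just a restatement of the Jacobi computation already executed: expanding a Jacobi field $J$ as $J = \sum f_j e_j$ and using $\langle J', \dot{\gamma}\rangle = 0$ plus the antisymmetry and closedness relations for $Y$ reduces (\ref{jacobi}) to the scalar system $f'' + K^\Omega f = 0$. Stacking fundamental solutions $X$ and $X'$ into $W = (X, X')^\top$ produces the first-order form (\ref{jac.mat.}). The only genuine obstacle in the whole argument is ensuring injectivity of $\Phi$ on a neighborhood of the full interval $[0,\tau] \times \{0\}$ rather than merely near a single point; this is precisely where the non-self-intersection hypothesis and the magnetic injectivity radius are used.
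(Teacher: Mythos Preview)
Your proposal is correct and mirrors the paper's approach exactly: the lemma is a consolidation of the construction carried out line-by-line in Section~3, and you have recapitulated each step (the $Y$-propagated frame $V_i$, the orthogonal rotation $P_t$ producing $e_i$, the exponential chart $\Phi$, and the reduction of the Jacobi equation to $f'' + K^\Omega f = 0$) in the same order and with the same justifications. Your index range $\sum_{i=2}^m$ in the definition of $\Phi$ is in fact the intended one; the paper's $\sum_{i=1}^m$ is a typo, since otherwise $\Phi(x_1,0,\ldots,0) \neq \gamma(x_1)$.
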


In the case of the geodesic flow, we have the same matrix with $K^\Omega$ being the Riemannian curvature matrix which is always a symmetric matrix. In our case does not run Fermi coordinates so we had to make a rotation of the coordinates of Fermi function of $\Omega$ and simultaneously obtain a symmetric matrix. This work is in Gouda \cite{G}

\begin{rem}
	For $m=3$ and $2c=1$, we have that
	
	$$\begin{array}{cc}
	
	\Omega:=\left( \begin{array}{ccc}
	0 & \alpha & -\beta\\
	-\alpha & 0 & \sigma\\
	\beta & -\sigma & 0
	\end{array}\right),
	
	& 
	
	R:=\left( \begin{array}{ccc}
	0 & 0 & 0\\
	0 & a & b\\
	0 & b & c
	\end{array}\right) 
	
	\end{array}$$
	
	then
	
	$$\partial\Omega=\left( \begin{array}{ccc}
	0 & 0 & 0\\
	-\partial_1\alpha & -\partial_2\alpha & -\partial_3\alpha\\
	\partial_1\beta & \partial_2\beta & \partial_3\beta
	\end{array}\right)$$
	
	and $\widetilde{\Omega}=\left(\begin{array}{cc}
	-\alpha^2 & \alpha\beta \\
	\alpha\beta & -\beta^2
	\end{array}\right) $. Thus the equation (\ref{eqf}) is written as
	
	\begin{eqnarray*}
	f''+
	\left(\left( \begin{array}{cc}
	a & b \\
	b & c
	\end{array}\right)
	+
	\left( \begin{array}{cc}
	-\partial_2\alpha & \partial_2\beta+\frac{1}{2}\partial_1\sigma \\
	\partial_2\beta+\frac{1}{2}\partial_1\sigma & \partial_3\beta
	\end{array}\right)
	+\right.\\
	\left.\left(\begin{array}{cc}
	-\dfrac{3}{4}\sigma^2+\alpha^2 & \alpha\beta \\
	\alpha\beta & -\dfrac{3}{4}\sigma^2+\beta^2
	\end{array}\right)
	\right)f&=&0
	\end{eqnarray*}
	
	wiht $f=\left(\begin{array}{c}f^2\\
	f^3\end{array}\right).$
\end{rem}
\end{section}

\begin{section}{Geometric control theory}\label{GCT}

Our aim here is to provide sufficient conditions for first and second order local controllability results. This kind of results could be developed for nonlinear control systems on smooth manifolds. For sake of simplicity, we restrict our attention here to the case of affine control systems on the set of (symplectic) matrices.\\

\textbf{The End-Point mapping.} Let us a consider a \textit{bilinear control system} on $M_{2n}(\mathbb{R})$ (with $n,k\geq 1$), of the form

\begin{eqnarray}\label{control}
	X'(t)=A(t)X(t)+\sum_{i=1}^{k}u_i(t)B_iX(t)\text{ for a.e. }t,
\end{eqnarray}

where the \textit{state} $X(t)$ belongs $M_{2n}(\mathbb{R})$, the \textit{control} $u(t)$ belongs to $\mathbb{R}^k$ and $t\in[0,T]\mapsto A(t)\in M_{2n}(\mathbb{R})$ (with $T>0$) is a smooth maps, and $B_1,\ldots,B_k$ are $k$ matrices in $M_{2n}(\mathbb{R})$. Given $\overline{X}\in M_{2n}(\mathbb{R})$ and $\overline{u}\in L^2([0,T];\mathbb{R}^k)$, the Cauchy problem 

$$\left\lbrace \begin{array}{l}
	X'(t)=A(t)X(t)+\displaystyle\sum_{i=1}^{k}\overline{u}_i(t)B_iX(t)\text{ for a.e. }t\in[0,T],\\
	X(0)=\overline{X},
\end{array}	\right. $$

possesses a unique solution $X_{\overline{X},\overline{u}}(\cdot)$. The \textit{End-Point mapping} associated with $\overline{X}$ in time $T>0$ is defined as 

$$\begin{array}{ccccc}
	E^{\overline{X},T}&:&L^2([0,T];\mathbb{R}^k)&\longrightarrow & M_{2n}(\mathbb{R})\\
	&&u&\longmapsto&X_{\overline{X},u}(T).
\end{array}$$

It is a smooth mapping whose differential can be expressed in terms of the linearized control systems. Given $\overline{X}\in M_{2n}(\mathbb{R})$, $\overline{u}\in L^2([0,T];\mathbb{R}^k)$, and setting $\overline{X}(\cdot):=X_{\overline{X},\overline{u}}(\cdot)$, the differential of $E^{\overline{X},T}$ at $\overline{u}$ is given by the linear operator

$$\begin{array}{ccccc}D_{\overline{u}}E^{\overline{X},T}&:&L^2([0,T];\mathbb{R}^k)&\longrightarrow & M_{2n}(\mathbb{R})\\&&v&\longmapsto&Y(T),
\end{array}$$

where $Y(\cdot)$ is the unique solution to the Cauchy problem

$$\left\lbrace \begin{array}{l}
Y'(t)=A(t)Y(t)+\displaystyle\sum_{i=1}^{k}v_i(t)B_i\overline{X}(t),\text{ for a.e. } t\in [0,T],\\ 
Y(0)=0.
\end{array}	\right. $$

Note that if we denote by $S(\cdot)$ the solution to the Cauchy problem

$$\left\lbrace \begin{array}{ccl} S'(t) & = & A(t)S(t)\text{ for every }t\in[0,T],\\ S(0) & = &I_{2n},
\end{array}	\right. $$	

 the there holds

$$D_{\overline{u}}E^{\overline{X},T}\cdot v=\sum_{i=1}^{k}S(T)\int_{0}^{T}v_i(t)S(t)^{-1}B_i\overline{X}(t)dt,$$

for every $v\in L^2([0,T];\mathbb{R}^k)$.

Let $Sp(n)$ be the symplectic group in $M_{2n}(\mathbb{R})$ $(n\geq 1)$, that is the smooth submanifold of matrices $X\in M_{2n}(\mathbb{R})$ satisfying

$$X^*\mathbb{J} X=\mathbb{J}\text{ where }\mathbb{J}=\left( \begin{array}{cc}	0 & I_n \\ -I_n & 0
\end{array}\right).$$

$Sp(n)$ has dimension $p:=n(2n+1)$. Denote by $S(2n)$ the set of symmetric matrices in $M_{2n}(\mathbb{R})$. The tangent spaces to $Sp(n)$ at the identity matrix is given by 

$$T_{I_{2n}}Sp(n)=\{Y\in M_{2n}(\mathbb{R}):\mathbb{J}Y\in S(2n)\}.$$

Therefore, if there holds

\begin{eqnarray}\label{condition}
	\mathbb{J}A(t),\mathbb{J}B_1,\ldots,\mathbb{J}B_k\in S(2n) \text{ for all }t\in[0,T],
\end{eqnarray}

then $Sp(n)$ is invariant with respect to (\ref{control}), that is for every $\overline{X}\in Sp(n)$ and $\overline{u}\in L^2([0,T];\mathbb{R}^k)$,

$$X_{\overline{X},\overline{u}}(t)\in Sp(n)\text{ for all }t\in[0,T].$$

In particular, this means that for every $\overline{X}\in Sp(n)$, the End-Point mapping $E^{\overline{X},T}$ is valued in $Sp(n)$. Given $\overline{X}\in Sp(n)$ and $\overline{u}\in L^2([0,T];\mathbb{R}^k)$, we are interested in local controllability propeties of (\ref{control}) around $\overline{u}$. The control systems (\ref{control}) is called \textit{controllable around} $\overline{u}$ in $Sp(n)$ (in time $T$) if for every final state $X\in Sp(n)$ close to $X_{\overline{X},\overline{u}}(T)$ there is a control $u\in L^2([0,T];\mathbb{R}^k)$ which steers $\overline{X}$ to $X$, that is such that $E^{\overline{X},T}(u)=X$. Such a property is satisfied as soon as $E^{\overline{X},T}$ is locally open at $\overline{u}$.\\

\textbf{First order controllability results.} Given $T>0$, $\overline{X}\in Sp(n)$ a mapping $t\in[0,T]\mapsto A(t)\in M_{2n}(\mathbb{R})$, $k$ matrices $B_1,\ldots,B_k\in M_{2n}(\mathbb{R})$ satisfying (\ref{condition}), and $\overline{u}\in L^2([0,T];\mathbb{R}^k)$ we say that the control systems (\ref{control}) is \textit{controllable at first order around} $\overline{u}$ in $Sp(n)$ if the mapping $E^{\overline{X},T}:L^2([0,T],\mathbb{R}^k)\to Sp(n)$ is a \textit{submersion} at $\overline{u}$, that is if the linear operator

$$D_{\overline{u}}E^{\overline{X},T}:L^2([0,T];\mathbb{R}^k)\to T_{\overline{X}(T)}Sp(n),$$

is surjective (with $\overline{X}(T):=X_{\overline{X},\overline{u}}(T)$). The following sufficient condition for first order controllability is given in [Rifford-Ruggiero]

\begin{prop}\label{propcontrol}
	Let $T>0,t\in[0,T]\mapsto A(t)$ a smooth mapping and $B_1,\ldots,B_k\in M_{2n}(\mathbb{R})$ satisfying (\ref{condition}). Define the $k$ sequences of smooth mappings 
	$$\{B^j_1\},\ldots,\{B^j_k\}:[0,T]\longrightarrow T_{I_{2n}}Sp(n)$$
	by
	\begin{eqnarray}\label{seq.}
	\left\lbrace 
		\begin{array}{l}
		B^0_i(t)=B_i(t)\\
		B^j_i(t)=\dot{B}^{j-1}_i(t)+B^{j-1}_i(t)A(t)-A(t)B^{j-1}_i(t),
	\end{array}
	\right.
	\end{eqnarray}
	for every $t\in [0,T]$ and every $i=1,\ldots,k$. Assume that there exists some $\overline{t}\in[0,T]$ such that 
	\begin{eqnarray}
		Span\left\lbrace B^j_i(\overline{t}):i=1,\ldots,k,j\in\mathbb{N}\right\rbrace=T_{I_{2n}}Sp(n). 
	\end{eqnarray}
	Then for every $\overline{X}\in Sp(n)$, the control system (\ref{control}) is controllable at first order around $\overline{u}\equiv0$.
\end{prop}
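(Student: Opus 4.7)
The plan is to exploit the explicit formula for the differential of the End-Point mapping given just before the statement. At $\overline{u}\equiv 0$ the Cauchy problem gives $\overline{X}(t)=S(t)\overline{X}$, so the formula simplifies to
$$D_0 E^{\overline{X},T}\cdot v \;=\; S(T)\Bigl(\sum_{i=1}^{k}\int_0^T v_i(t)\,C_i(t)\,dt\Bigr)\overline{X},\qquad C_i(t):=S(t)^{-1}B_iS(t).$$
Since $S(T),\overline{X}\in Sp(n)$, both $L\mapsto S(T)L$ and $L\mapsto L\overline{X}$ are linear isomorphisms carrying $T_{I_{2n}}Sp(n)$ onto $T_{\overline{X}(T)}Sp(n)$. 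Thus the submersion condition at $\overline{u}\equiv 0$ reduces to surjectivity onto $T_{I_{2n}}Sp(n)$ of the averaging map $v\mapsto\sum_i\int_0^T v_i(t)C_i(t)\,dt$. A first check, using $\mathbb{J}S^{-1}=S^{*}\mathbb{J}$ (valid for $S\in Sp(n)$) together with $\mathbb{J}B_i\in S(2n)$, confirms that $\mathbb{J}C_i(t)=S(t)^{*}(\mathbb{J}B_i)S(t)\in S(2n)$, so the reduced map indeed takes values in the correct subspace.

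The heart of the argument is the identity
$$\frac{d^j}{dt^j}C_i(t)\;=\;S(t)^{-1}B_i^j(t)S(t)\qquad\text{for every }j\ge 0,$$
which I would prove by induction on $j$. The base case is the definition of $C_i$, and the inductive step computes, using $(S^{-1})'=-S^{-1}A$ and the Leibniz rule,
$$\frac{d}{dt}\bigl(S^{-1}B_i^{j-1}S\bigr)=S^{-1}\bigl(\dot B_i^{j-1}+B_i^{j-1}A-AB_i^{j-1}\bigr)S\;=\;S^{-1}B_i^j S,$$
which is exactly the recursion (\ref{seq.}) defining $B_i^j$.

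The conclusion follows by contradiction. Suppose the reduced averaging map fails to be surjective; then there exists a nonzero linear functional $\ell$ on $T_{I_{2n}}Sp(n)$ that annihilates its image. Varying $v$ with supports shrinking around any $t$ and using continuity of $C_i$ gives $\ell(C_i(t))=0$ for every $t\in[0,T]$ and every $i$. Differentiating in $t$ repeatedly and invoking the identity above yields $\ell\bigl(S(t)^{-1}B_i^j(t)S(t)\bigr)=0$ for all $i,j,t$. Specializing at $t=\overline{t}$, the map $Y\mapsto S(\overline{t})^{-1}YS(\overline{t})$ is a linear automorphism of $T_{I_{2n}}Sp(n)$ (by the same $Sp(n)$-invariance check used above), so the pull-back $\widetilde\ell(Y):=\ell\bigl(S(\overline{t})^{-1}YS(\overline{t})\bigr)$ is a nonzero functional that annihilates every $B_i^j(\overline{t})$. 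The spanning hypothesis then forces $\widetilde\ell=0$, a contradiction. Hence $D_0E^{\overline{X},T}$ is surjective onto $T_{\overline{X}(T)}Sp(n)$.

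The main obstacle is not the algebra itself, which reduces to a clean induction, but the consistent bookkeeping: every object manipulated — the matrices $C_i(t)$, their derivatives, the functional $\ell$, and its pull-back $\widetilde\ell$ — must be seen to remain in the tangent space $T_{I_{2n}}Sp(n)$ rather than drifting into the ambient $M_{2n}(\mathbb{R})$. This is precisely where hypothesis (\ref{condition}) is indispensable: it guarantees $Sp(n)$-invariance of the flow and keeps the conjugation action inside the Lie algebra of $Sp(n)$, so that dimensions match and the dual argument operates in the right space of dimension $p=n(2n+1)$.
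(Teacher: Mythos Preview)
Your argument is correct. The paper itself does not supply a proof of this proposition: it is quoted as a black box from Rifford--Ruggiero (see the sentence immediately preceding the statement, ``The following sufficient condition for first order controllability is given in [Rifford-Ruggiero]''). What you have written is essentially the standard proof of this Kalman-type criterion for bilinear systems on a matrix Lie group: reduce the differential at $\overline{u}\equiv 0$ to an averaging map valued in the Lie algebra, identify the successive $t$-derivatives of the conjugated generators $C_i(t)=S(t)^{-1}B_iS(t)$ with the recursively defined $B_i^j$, and conclude by duality. All the bookkeeping you flag --- that $C_i(t)$ and its derivatives stay in $T_{I_{2n}}Sp(n)$, and that conjugation by $S(\overline{t})\in Sp(n)$ is a Lie-algebra automorphism --- is handled correctly via hypothesis~(\ref{condition}).
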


The control system which is relevant in the present paper is not always controllable at first order. We need sufficient condition for controllability et second order.

\textbf{Second-order controllability results.} Using the same notations as above we say that the control system (\ref{control}) is \textit{controllable at second order around} $\overline{u}$ in $Sp(n)$ if there are $\mu,K>0$ such that for every $X\in B(\overline{X}(T),\mu)\cap Sp(n)$, there is $u\in L^2([0,T],\mathbb{R}^k)$ satisfying

$$E^{\overline{X},T}(u)=X\text{ and }\|u\|_{L^2}\leq K|X-\overline{X}(T)|^{1/2}.$$

Obtaining such a property requires a study of the End-Point mapping at second order. Recall that given two matrices $B,B'\in M_{2n}(\mathbb{R})$, the bracket $[B,B']$ is the matrix of $M_{2n}(\mathbb{R})$ defined as 

$$[B,B']:=BB'-B'B.$$

The following results are the key points in the proof of our main theorem.

\begin{prop}\label{seg.ord.}
	Let $T>0,t\in[0,T]\to A(t)$ a smooth mapping and $B_1\ldots,B_k\in M_{2n}(\mathbb{R})$ satisfying (\ref{condition}) such that 
	
	$$B_iB_j=0\text{ for every } i,j=i,\ldots,k.$$
	
	Define the $k$ sequences of smooth mapping $\{B^j_1\},\ldots,\{B^j_k\}:[0,T]\to T_{I_{2n}}Sp(n)$ by (\ref{seq.}) and assume that the following properties are satisfied with $\overline{t}=0:$
	
	$$[B_i^j(\overline{t}),B_i]\in Span\{B^s_r(\overline{t}):r=1,\ldots,k,s\geq 0\}$$
	
	for every $i=1,\ldots,k$, $j=1,2$, and
	
	$$Span\{B_i^j(\overline{t}),[B_i^1(\overline{t}),B_l^1(\overline{t})]:i,l=1,\ldots,k\text{ and }j=0,1,2\}=T_{I_{2n}}Sp(n).$$
	
	Then, for every $\overline{X}\in Sp(n)$, the control system (\ref{control}) is controllable at second order around $\overline{u}\equiv 0.$
\end{prop}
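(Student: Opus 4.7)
The plan is to follow the strategy for proving second-order local openness of End-Point mappings on Lie groups, in the spirit of Agrachev-Sachkov and Rifford-Ruggiero. Since the flow of (\ref{control}) is right-equivariant ($X_{\overline{X}\overline{R},u}(t)=X_{\overline{X},u}(t)\overline{R}$) and $Sp(n)$ is invariant under right-translation, I would first reduce to $\overline{X}=I_{2n}$: right-multiplication by $\overline{X}$ is a diffeomorphism of $Sp(n)$ conjugating the two problems. Let $S(t)$ solve $S'=A(t)S$ with $S(0)=I_{2n}$, and set $\tilde B_i(t):=S(t)^{-1}B_iS(t)$. Variation of constants yields the Chen-type expansion
$$S(T)^{-1}E^{I_{2n},T}(u)=I_{2n}+\sum_i\int_0^T u_i(t)\tilde B_i(t)\,dt+\sum_{i,j}\iint_{0\le s\le t\le T}u_i(s)u_j(t)\tilde B_j(t)\tilde B_i(s)\,ds\,dt+O(\|u\|_{L^2}^3).$$
The hypothesis $B_iB_j=0$ annihilates the diagonal of the quadratic term after symmetrization in $s,t$, leaving only commutator contributions $\tfrac12\iint u_i(s)u_j(t)[\tilde B_j(t),\tilde B_i(s)]\,ds\,dt$.

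For the first-order image, I would differentiate $\tilde B_i$ at $\overline{t}=0$ and show by induction on $j$ that $\tfrac{d^j}{dt^j}\tilde B_i\big|_{t=0}=B_i^j(0)$, where the $B_i^j$ are defined by (\ref{seq.}). Choosing controls $v$ supported near $0$ with prescribed moments $\int v_i(t)t^j\,dt$, the image of $D_0E^{I_{2n},T}$ contains the span $\mathcal{L}_1:=\mathrm{Span}\{B_i^j(0):i=1,\dots,k,\ j\ge 0\}$; this is essentially the content of Proposition \ref{propcontrol} specialized at $\overline{t}=0$. The self-consistency hypothesis $[B_i^j(0),B_i]\in\mathcal{L}_1$ for $j=1,2$ will be used to absorb the error terms generated by integration by parts in $s,t$ during the second-order analysis.

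To reach directions outside $\mathcal{L}_1$, I would use short pulse controls $u_i(t)=\alpha_i\varphi_\varepsilon(t)$ concentrated near $0$, normalized so that their leading moments vanish and the first-order term drops out. Expanding $\tilde B_j(t)=B_j+tB_j^1(0)+O(t^2)$ and similarly for $\tilde B_i(s)$ and plugging into the quadratic Chen term, the vanishing of $B_iB_j$ forces the leading surviving contribution to be a combination of $[B_i^1(0),B_l^1(0)]$ with coefficients bilinear in $\alpha$. The subprincipal corrections arising from further $s,t$-derivatives all involve brackets of the form $[B_i^j(0),B_i]$ with $j=1,2$, which by the structural hypothesis lie in $\mathcal{L}_1$ and can therefore be cancelled by an additive linear-in-$\alpha$ correction to $u$ drawn from the first-order image. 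Together with Proposition \ref{propcontrol}, this shows that the second-order image of $E^{I_{2n},T}$ contains the span in the second hypothesis, which by assumption is all of $T_{I_{2n}}Sp(n)$.

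The conclusion is then a quantitative open-mapping argument of Graves/Agrachev type for mappings smooth up to second order. Parameterizing controls as $u=v+\sqrt{\mu}\,w$, where $v$ is linear in the first-order directions and $w$ is drawn from the finite family of pulses realizing the second-order directions, the second-order surjectivity just established, combined with a Brouwer fixed-point argument, yields for every $X\in Sp(n)$ at distance at most $\mu$ from $\overline{X}(T)$ a control $u$ satisfying $E^{\overline{X},T}(u)=X$ and $\|u\|_{L^2}\le K|X-\overline{X}(T)|^{1/2}$. The main obstacle I foresee is the third paragraph: proving that the bracket directions $[B_i^1(0),B_l^1(0)]$ are genuinely reached with $L^2$-cost of order $\sqrt{\mu}$ rather than worse, which demands an explicit construction of pulses whose quadratic action sweeps these directions and a careful verification that the subprincipal terms fall into $\mathcal{L}_1$ exactly via the hypothesis $[B_i^j(0),B_i]\in\mathcal{L}_1$. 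This bracket bookkeeping is the delicate part of the argument.
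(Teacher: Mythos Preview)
The paper does not actually prove this proposition: in Section~4 it is stated without proof as a result imported from geometric control theory, with the references \cite{RR} and \cite{LRR} given as the source (see the sentence ``The following results are the key points in the proof of our main theorem'' immediately preceding Proposition~\ref{seg.ord.}, and the parametrized Proposition~\ref{parameters} which follows it, also unproved). So there is no ``paper's own proof'' to compare against; the authors treat this as a black box and only verify, in Section~5, that the hypotheses (\ref{cond1})--(\ref{cond3}) hold for the specific control system arising from magnetic Jacobi equations.

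Your outline follows the strategy of Lazrag--Rifford--Ruggiero \cite{LRR}, which is indeed where the proof lives: reduction to $\overline{X}=I_{2n}$ by right-equivariance, Chen/variation-of-constants expansion of $S(T)^{-1}E^{I_{2n},T}(u)$, identification of the first-order image with $\mathcal{L}_1=\mathrm{Span}\{B_i^j(0)\}$ via moments of controls supported near $\overline{t}=0$, and extraction of the bracket directions $[B_i^1(0),B_l^1(0)]$ from the quadratic term using pulse controls. One point to tighten: your sentence ``the hypothesis $B_iB_j=0$ annihilates the diagonal of the quadratic term after symmetrization in $s,t$'' is not literally correct, since $\tilde B_j(t)\tilde B_i(s)=S(t)^{-1}B_jS(t)S(s)^{-1}B_iS(s)$ does not vanish for $s\neq t$. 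What the hypothesis actually buys is that the \emph{leading} term in the Taylor expansion of $\tilde B_j(t)\tilde B_i(s)$ at $s=t=0$ vanishes, so that after choosing pulses with appropriate vanishing moments the dominant surviving quadratic contribution is governed by the first nontrivial brackets. This is exactly the bookkeeping you flag as delicate in your last paragraph, and it is the genuine work in \cite{LRR}; your identification of the obstacle is accurate.
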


We will need the following parametrized version of Proposition \ref{seg.ord.} which will follow from the fact that smooth controls with support in $(0,T)$ are dense in $L^2([0,T],\mathbb{R}^k)$ and compactness.

\begin{prop}\label{parameters}
	Let $T>0$, and for every $\theta$ in some set of parameters $\Theta$ let $t\in[0,T]\to A^\theta(t)$ be a smooth mapping and $B^\theta_1,\ldots,B^\theta_k\in M_{2n}(\mathbb{R})$ satisfying (\ref{condition}) such that 
	
	\begin{eqnarray}\label{cond1}			B^\theta_iB^\theta_j=0 \text{ for every }i,j=1,\ldots,k.
	\end{eqnarray}
	
	Define for every $\theta\in\Theta$ the $k$ sequences of smooth mapping $\{B_1^{\theta,j}\},\ldots,\{B_k^{\theta,j}\}:[0,T]\to T_{I_{2n}}Sp(n)$ as in (\ref{seq.}) and assume that the following properties are satisfied with $\overline{t}=0$ for every $\theta\in\Theta$:
	
	\begin{eqnarray}\label{cond2} 
		[B^{\theta,j}_i(\overline{t}),B^\theta_i]\in Span\{B^{\theta,s}_r(\overline{t}):r=1,\ldots,k,s\geq 0\},
	\end{eqnarray}
	
	for every $i=1,\ldots,n$, $j=1,2$ and 
	
	\begin{eqnarray}\label{cond3}
	\end{eqnarray}
		$$Span\{B^{\theta,j}_i(\overline{t}),[B^{\theta,1}_i(\overline{t}),B^{\theta,1}_l(\overline{t})]:i,l=1,\ldots,k\text{ and }j=0,1,2\}=T_{I_{2n}}Sp(n).$$

	Assume moreover, that the sets
	
	$$\{B^\theta_i:i=1,\ldots,k,\theta\in\Theta\}\subset M_{2n}(\mathbb{R})$$
	
	and
	
	$$\{t\in[0,T]\mapsto A^\theta(t):\theta\in\Theta\}\subset C^2([0,T];M_{2n}(\mathbb{R}))$$
	
	are compact. Then, there are $\mu,K>0$ such that for every $\theta\in\Theta$, every $\overline{X}\in Sp(n)$ and every $X\in B(\overline{X}^\theta(T),\mu)\cap Sp(n)$ ($\overline{X}^\theta(T)$ denotes the solution at time $T$ of the control system (\ref{control}) with parameter $\theta$ starting from $\overline{X}$), there is $u\in C^\infty([0,T];\mathbb{R}^k)$ with support in $[0,T]$ satisfying
	
	$$E^{\overline{X},T}_\theta(u)=X\text{ and }\|u\|_{C^2}\leq K|X-\overline{X}(T)|^{1/2}$$
	
	($E^{\overline{X},T}_\theta$ denotes the End-Point mapping associated with the control system (\ref{control}) with parameter $\theta$). 

\end{prop}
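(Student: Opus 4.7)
The plan is to derive Proposition \ref{parameters} from Proposition \ref{seg.ord.} by combining (i) a compactness argument yielding uniform constants $\mu, K$ over the parameter set $\Theta$, and (ii) an inspection of the control constructed in the proof of Proposition \ref{seg.ord.} to upgrade the $L^2$ estimate implicit in that statement to the $C^2$ estimate claimed here. First I would quantify the nondegeneracy hypothesis (\ref{cond3}) by the Gram determinant
\[
F(A, B_1, \ldots, B_k) := \det G(A, B_1, \ldots, B_k),
\]
where $G$ is the Gram matrix, with respect to a fixed inner product on $T_{I_{2n}}Sp(n)$, of the spanning family $\{B_i^j(0),\ [B_i^1(0), B_l^1(0)] : i,l=1,\ldots,k,\ j=0,1,2\}$. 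Since each $B_i^j(0)$ is, by the recursion (\ref{seq.}), a polynomial expression in the $B_i$ and the first $j-1$ derivatives of $A$ at $0$, the function $F$ is continuous on $C^2([0,T]; M_{2n}(\mathbb{R})) \times M_{2n}(\mathbb{R})^k$. Hypothesis (\ref{cond3}) gives $F(A^\theta, B_i^\theta) > 0$ for every $\theta \in \Theta$, and the compactness hypotheses place $\{(A^\theta, B_i^\theta)\}_{\theta \in \Theta}$ in a compact subset of this product space, so $F$ admits a strictly positive lower bound $F_0 > 0$ there.

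Next, I would argue by contradiction for the uniform constants. Supposing no $(\mu, K)$ works, extract a sequence $\theta_n \in \Theta$ and targets $X_n$ witnessing the failure, and by compactness pass to a subsequence along which $(A^{\theta_n}, B_i^{\theta_n}) \to (A^\star, B_i^\star)$ in $C^2 \times M_{2n}^k$. The conditions (\ref{condition}), (\ref{cond1}) and (\ref{cond2}) are closed and pass to the limit, while (\ref{cond3}) for the limit follows from the uniform lower bound $F \ge F_0$. Proposition \ref{seg.ord.} applied to the limit gives finite constants $(\mu^\star, K^\star)$. Continuous dependence of the End-Point mapping $E^{\overline{X}, T}_\theta$ and its derivatives on $(A^\theta, B_i^\theta)$ in the $C^0$-topology, together with the explicit form of the controls described below, transfers second-order controllability from the limit data to $\theta_n$ for large $n$, contradicting the choice of $\theta_n$.

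The final ingredient, which yields the $C^2$ bound, is to open up the construction underlying Proposition \ref{seg.ord.} in \cite{RR}. That proof produces a second-order control of the form $u(t) = \sum_\alpha c_\alpha \phi_\alpha(t)$, where $\{\phi_\alpha\}$ is a fixed finite family of smooth bump functions supported in $(0,T)$ (depending only on $T$ and $k$, not on $\theta$), and the coefficients $c_\alpha$ solve a quadratic implicit problem whose linear part is the Gram matrix $G$. The uniform lower bound on $F$ gives a uniform bound on $\|G^{-1}\|$, so $|c_\alpha| \le C |X - \overline{X}^\theta(T)|^{1/2}$ with $C$ independent of $\theta$. Since the $\phi_\alpha$ are fixed, $\|u\|_{C^2} \le C' \max_\alpha |c_\alpha|$, yielding the claim.

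The main obstacle, in my view, is precisely this passage from $L^2$ to $C^2$: it cannot follow formally from Proposition \ref{seg.ord.} as stated, but requires inspecting the internal construction and using the compactness of $\{A^\theta\}$ in the $C^2$-topology (not merely $C^0$), since the bracket data $B_i^j(0)$ appearing in $G$ involve up to two derivatives of $A^\theta$ at $0$, and without $C^2$-compactness the coefficients $c_\alpha$ would not depend uniformly continuously on $\theta$.
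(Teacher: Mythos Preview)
The paper does not give a proof of this proposition beyond the single sentence preceding its statement: it ``will follow from the fact that smooth controls with support in $(0,T)$ are dense in $L^2([0,T],\mathbb{R}^k)$ and compactness.'' Your proposal is considerably more detailed than what the paper offers and is broadly sound; in particular your use of compactness to extract uniform constants is exactly what the paper has in mind. The one place where you take a genuinely different route is the passage from the $L^2$ estimate of Proposition~\ref{seg.ord.} to the $C^2$ estimate here: the paper gestures at density of smooth compactly supported controls in $L^2$, whereas you open up the Rifford--Ruggiero construction to see that the control is already a finite combination $u=\sum_\alpha c_\alpha\phi_\alpha$ of fixed smooth bumps, with coefficients controlled by the inverse Gram matrix. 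Your route is more convincing, since density alone does not obviously preserve the quantitative bound $\|u\|\lesssim |X-\overline X(T)|^{1/2}$ when one changes norms.

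Two small points to tighten. First, the Gram matrix $G$ of the spanning family in (\ref{cond3}) is in general not square (the family has more vectors than $p=\dim T_{I_{2n}}Sp(n)$), so ``$\det G>0$'' should be replaced by a rank condition, e.g.\ a uniform positive lower bound on the $p$-th singular value of $G$, or equivalently on $\max_I \det G_I$ over all $p$-element subfamilies $I$; this is what actually controls $\|G^{-1}\|$ (in the least-squares sense) and hence the coefficients $c_\alpha$. Second, your claim that condition (\ref{cond2}) is closed under limits is not automatic: the right-hand side is a span of an a priori infinite family $\{B_r^{\theta,s}(0):s\ge 0\}$, and spans do not behave well under limits. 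In the application of the paper (and in \cite{RR}) one actually has the stronger inclusion $[B_i^{j}(0),B_i]\in\mathrm{Span}\{B_r^{s}(0):s\le j\}$ for $j=1,2$, which involves only finitely many vectors depending polynomially on the data and is therefore a closed condition; you should either assume this stronger form or note that (\ref{cond3}) already forces the infinite span to equal $T_{I_{2n}}Sp(n)$, making (\ref{cond2}) trivially closed on the set where $F\ge F_0$.
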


\end{section}
 
\begin{section}{Local perturbations of the magnetic flow}

In this section we obtain the perturbation result, which is the heart of this work. Our main reference is here Rifford and Ruggiero \cite{RR}.

Applying Lemma \ref{coordenadas} to a piece of a closed orbit $\theta$ for the magnetic flow of $\Omega$, we may assume that $\psi(x)=0$ and $d_x\psi\cdot v=(1,0...,0)$, then  $\theta_t=\phi^\Omega_t(x,v)=(\psi^{-1}(t,0,\ldots0),(d_x\psi)^{-1}(1,0,\ldots,0)):[0,\tau]\to T^cM$, for some $0<\tau<K=K(c,\Omega)$. We need to study generic perturbations of $\Omega$ in the neighborhood $U$ of $\gamma$.

Let $\delta>0$ fix such that $\psi([0,\tau]\times (-\delta,\delta)^n)\subset U$. Let a family of smooth function $u_{ij}:[0,\tau]\to\mathbb{R}$ such that

$$Supp(u_{ij})\subset (0,\tau)\text{ for every }i\leq j\text{ in }2,\ldots,m.$$

We consider $f:[0,+\infty)\to [0,+\infty)$ a smooth function bump such that $f(\lambda)=1$ if $3\lambda\leq1$ and $f(\lambda)=0$ if $3\lambda\geq 2$, we define a family of smooth perturbations $f_i:M\to\mathbb{R}$ with support in $\psi([0,\tau]\times (-\delta,\delta)^n)$ by

\begin{eqnarray*}
	f_1(\psi(x_1,x_2,\ldots,x_m))&=&-\frac{1}{\sqrt{2c}}\sum_{i<j=2}^{m}u_{ij}(x_1)x_ix_jf(\|(x_2,\ldots,x_m)\|),\\	
	f_i(\psi(x_1,x_2,\ldots,x_m))&=&\dfrac{1}{\sqrt{2c}}\int_{0}^{x_1}u_{ii}(s)dsx_if(\|(x_2,\ldots,x_m)\|),
\end{eqnarray*}

for $i=2,\ldots,m$. Now consider the 1-form in $M$ define by $\eta=\displaystyle
\sum^{m}_{k=1}f_kdx_k$ with support in $\psi([0,\tau]\times (-\delta,\delta)^n)$. Then taking $\delta\Omega:=d\eta$ with support in $\psi([0,\tau]\times (-\delta,\delta)^n)$ in coordinates as 

$$(d\eta)_{21}=-\frac{1}{\sqrt{2c}}\sum^{m}_{l=2}u_{2l}(x_1)x_lf(\|(x_2,\ldots,x_m)\|),$$

and for $i=3,\ldots,m$ have that

\begin{eqnarray*}
	(d\eta)_{i1}&=&-\frac{1}{\sqrt{2c}}\left(\sum_{l=2}^{i-1}u_{li}(x_1)x_lf(\|(x_2,\ldots,x_m)\|)+\right. \\
	&&\left.\sum_{l=i}^{m}u_{il}(x_1)x_lf(\|(x_2,\ldots,x_m)\|)\right),
\end{eqnarray*}

and $(d\eta)_{ij}=0$ otherwise. Thus we have that $(x_1=t)$

$$U(t):=\frac{1}{2}(d\eta)'-\partial(d\eta)=\left( \begin{array}{ccccc}
u_{22}(t) & u_{23}(t) & \ldots & u_{2m}(t) \\\\
u_{23}(t) & u_{33}(t) & \ldots & u_{3m}(t) \\\\
\vdots & \vdots & \ddots & \vdots \\\\
u_{2m}(t) & u_{3m}(t) & \ldots & u_{mm}(t)
\end{array}\right) $$\\

a symmetric $n\times n$-matrix. Note that $d\eta_{\gamma(t)}=0$ for all $t\in[0,\tau]$, then

\begin{eqnarray}\label{pert.}
	K^{\Omega+d\eta}(t)=K^\Omega(t)-U(t),
\end{eqnarray}

and the cohomology class $[d\eta]=0$ this is $[\Omega]=[\Omega+\delta\Omega]$ in $H^2(M,\mathbb{R})$. Since $f(\|(x_2,\ldots,m)\|)$ and their derivatives vanish along the segment $\gamma((0,\tau))$, the trajectory $\theta_t$ is an orbit of the magnetic flow of $\Omega+\delta\Omega$ and the level energy is preserved. Using Lemma \ref{coordenadas} in (\ref{jac.mat.}) and by the Jacobi equation, we have that 

$$d_\theta P(\Omega+\delta\Omega)(\tau)(J(0),J'(0))=(J(\tau),J'(\tau)),$$

were $J:[0,\tau]\to \mathbb{R}^n$ is solution to the Jacobi equation

$$J''(t)+K^{\Omega+\delta\Omega}(t)J(t)=0,\text{ for every }t\in[0,\tau].$$

In other terms, $d_\theta P(\Omega+\delta\Omega)(\tau)$ is equal to the $n\times n$ symplectic matrix $X(\tau)$ given by the solution $X:[0,\tau]\to Sp(n)$ at time $\tau$ of the following Cauchy problem:

$$\left\lbrace \begin{array}{l}
X'(t)=A(t)X(t)+\displaystyle\sum_{i\leq j=2}^{m}u_{ij}(t)\mathcal{E}(ij)X(t),\text{ for all }t\in[0,\tau],\\
X(0)=I_{2n},
\end{array}	\right. $$

where the $2n\times 2n$ matrices $A(t),\mathcal{E}(ij)$ are defined by 

$$A(t):=\left( \begin{array}{cc}
	0 & I_n \\
	-K^\Omega(t) & 0
\end{array}\right)\text{ for every }t\in[0,\tau]$$

and 

$$\mathcal{E}(ij):=\left( \begin{array}{cc}
0 & 0 \\
E(ij) & 0
\end{array}\right),$$

where the $E(ij),$ $2\leq i\leq j\leq m$ are the symmetric $n\times n$ matrices defined by 

$$(E(ij))_{k,l}=\delta_{ik}\delta_{jl}+\delta_{il}\delta_{jk}, \text{ for every }i,j=2,\ldots,m.$$

Since our control system has the form (\ref{control}), all the results gathered in Section \ref{GCT} apply. By compactness of $M$ and regularity of the magnetic flow, the compactness assumption in Proposition \ref{parameters} are satisfied. It remains to check that assumptions (\ref{cond1}), (\ref{cond2}) and (\ref{cond3}) hold. 

First we check immediately that 

$$\mathcal{E}(ij)\mathcal{E}(kl)=0,\text{ for every }i,j,k,l\in\{2,\ldots,m\}\text{ with }i\leq j,k\leq l.$$

So, assumption (\ref{cond1}) is satisfied. Since the $\mathcal{E}(ij)$ do not depend on time, we check easily that the matrices $B^0_{ij},B^1_{ij},B^2{ij}$ associated to our system are given by

$$\left\lbrace\begin{array}{c} B^0_{ij}(t)=B_{ij}:=\mathcal{E}(ij) \\ \\
		B^1_{ij}(t)=[\mathcal{E}(ij),A(t)] \\ \\
		B^2_{ij}(t)=[[\mathcal{E}(ij),A(t)],A(t)],
\end{array}\right. $$

for every $t\in[0,\tau]$ and any $i,j=2,\ldots,m$ with $i\leq j$. An easy computation yields for any $i,j=2,\ldots,m$ with $i\leq j$ and any $t\in[0,\tau]$,

$$B^1_{ij}(t)=[\mathcal{E}_{ij},A(t)]=\left( \begin{array}{cc} -E(ij) & 0 \\
		0 & E(ij) \end{array} \right) $$

and

$$B^2_{ij}(t)=[[\mathcal{E}_{ij},A(t)],A(t)]=\left(\begin{array}{cc} 0 & -2E(ij) \\
-E(ij)K^\Omega(t)-K^\Omega(t)E(ij) & 0 \end{array} \right).$$

Then we get for any $i,j=2,\ldots,m$ with $i\leq j$,

$$[B^1_{ij}(0),B_{ij}]=2\left(\begin{array}{cc} 0 & 0 \\
(E(ij))^2 & 0 \end{array}\right)\in Span\left\lbrace B^0_{rs}(0):r\leq s\right\rbrace $$

and 

$$[B^2_{ij}(0),B_{ij}]=2\left(\begin{array}{cc} -(E(ij))^2 & 0 \\
0 & (E(ij))^2 \end{array}\right)\in Span\left\lbrace B^1_{rs}(0):r\leq s\right\rbrace.$$

So assumption (\ref{cond2}) is satisfied. It remains to show that (\ref{cond3}) holds. We first notice that for any $i,j,k,l,=2,\ldots,m$ with $i\leq j,k\leq l$, we have

\begin{eqnarray*}
	[B^1_{ij}(0),B^1_{kl}(0)]&=&[[\mathcal{E}(ij),A(0)],[\mathcal{E}(kj),A(0)]]\\	
	&=&\left(\begin{array}{cc} [E(ij),E(kl)]&0\\0&[E(ij),E(kl)],
	\end{array}\right) 
\end{eqnarray*}

with

\begin{eqnarray}\label{efe}
	[E(ij),E(kl)]=\delta_{il}F(jk)+\delta_{jk}F(il)+\delta_{ik}F(jl)+\delta_{jl}F(ik),
\end{eqnarray}

where $F(pq)$ is the $n\times n$ skew-symmetric matrix defined by

$$(F(pq))_{rs}=\delta_{rp}\delta_{sq}-\delta_{rq}\delta_{sp}.$$

It is sufficient to show that the space $S\subset M_{2n}(\mathbb{R})$ given by

$$S:=Span\left\lbrace B^0_{ij}(0),B^1_{ij}(0),B^2_{ij}(0),[B^1_{kl}(0),B^1_{rs}(0)]:i,j,k,l,r,s\right\rbrace\subset T_{I_{2n}}Sp(n)$$

has dimension $p$. First since the set matrices $\mathcal{E}(ij)$ with $i,j=2,\ldots,m$ with $i\leq j$ forms a basis of the vector space of $n\times n$ symmetric matrices $S(n)$ we check easily by the formulas that the vector space

$$S_1:=Span\{B_{ij},B^2_{ij}(0):i,j\}=Span\left\lbrace \mathcal{E}(ij),[[\mathcal{E}(ij),A(t)],A(t)]:i,j\right\rbrace $$

has dimension $n(n+1)$, We check easily that the vector spaces 

$$S_2:=Span\{B^1_{ij}(0):i,j\}=Span\{[\mathcal{E}(ij),A(0)]:i,j\}$$

and

$$S_3:=Span\{[B^1_{ij}(0),B^1_{kl}(0)]:i,j,k,l\}=Span\{[[\mathcal{E}(ij),A(0)],[\mathcal{E}(kl),A(0)]]:i,j,k,l\}$$

are orthogonal to $S_1$ with respect to the scalar product $P\cdot Q=tr(P^*Q)$. So, we need to show that $S_2+S_3$ has dimension $n^2$. By the above formulas, we have

$$S_2=Span\left\lbrace \left(\begin{array}{cc}-E(ij)&0\\0&E(ij)\end{array}\right):i,j\right\rbrace $$

and

$$S_3=Span\left\lbrace \left(\begin{array}{cc}[E(ij),E(kl)]&0\\0&[E(ij),E(kl)]\end{array}\right):i,j,k,l\right\rbrace,$$

and in addition $S_2$ and $S_3$ are orthogonal. Then first space $S_2$ has the same dimension as $S(n)$, that is $n(n+1)/2$. Moreover, by (\ref{efe}) for every $i\neq j,k=i$ and $l\notin\{i,j\}$, we have 

$$[E(ij),E(kl)]=F(jl).$$

The space spanned by the matrices of the form 

$$\left(\begin{array}{cc}F(jl)&0\\0&F(jl)\end{array} \right),$$

with $2\leq j< l\leq m$ has dimension $n(n-1)/2$. This shows that $S_3$ has dimension at least $n(n-1)/2$ and so $S_2\oplus S_3$ has dimension $n^2$. Thus we have proved the following result.

Let $\mathcal{F}$ the set of $d\eta$ where the $\eta\in\Omega^1(M)$ defined as above, consider

\begin{eqnarray*}
	S_{\tau,\theta}&:&\mathcal{F} \longrightarrow Sp(n),\\
	 & &d\eta \longmapsto d_\theta P(\Omega+d\eta)(\tau).
\end{eqnarray*}

\begin{thm}\label{pert.}
	Let $c>0$ and $\Omega\in\overline{\Omega}^2(M)$ and $0<\tau<K(c,\Omega)$.  There is $\overline{\delta},K>0$ (depending on $c,\Omega$ and $\tau$) such that the
	following property holds:
	
	For each $\theta\in T^cM$, $\mathcal{F}$ as defined above, and $\delta\in(0,\overline{\delta})$,
	
	$$B(S_{\tau,\theta}(0),\delta K)\cap Sp(n)\subset S_{\tau,\theta}\left( B_{C^r}(0,\delta)\cap\mathcal{F}\right). $$
	
	where $B_{C^r}(0,\delta)\subset\overline{\Omega}^2(M)$ is the open ball of radius $\delta$ centred at $0\in\overline{\Omega}^2(M)$ in the $C^r$ topology $(1\leq r)$.

\end{thm}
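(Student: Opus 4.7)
The approach is to apply the parametrized second-order controllability result, Proposition \ref{parameters}, to the bilinear control system explicitly constructed in this section. The preceding computations already express $S_{\tau,\theta}(d\eta)$ as the time-$\tau$ End-Point value of the system (\ref{control}) with $\theta$-dependent drift $A^\theta(t)$ (built from the magnetic curvature $K^\Omega(t)$) and time-independent control matrices $\mathcal{E}(ij)$, and verify the algebraic hypotheses (\ref{cond1}), (\ref{cond2}), (\ref{cond3}). I would take the parameter set to be $\Theta = T^cM$. The compactness requirements of Proposition \ref{parameters} hold because the $\mathcal{E}(ij)$ are constant in $\theta$ and hence trivially form a compact family, while the adapted frame $e_i(t)$ produced in Section 3 depends continuously on $\theta$, so $K^\Omega(\cdot)$ (and therefore $A^\theta(\cdot)$) varies continuously in the $C^2$-topology on $[0,\tau]$; since $T^cM$ is compact, $\{A^\theta(\cdot):\theta\in T^cM\}$ is compact in $C^2([0,\tau];M_{2n}(\mathbb{R}))$.

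Proposition \ref{parameters} then yields constants $\mu,K_0>0$ independent of $\theta$ such that, for every $\theta\in T^cM$ and every $X\in B(S_{\tau,\theta}(0),\mu)\cap Sp(n)$, there is a smooth control $u=(u_{ij})$ supported in $(0,\tau)$ with $E^{I_{2n},\tau}_\theta(u)=X$ and $\|u\|_{C^2}\le K_0\,|X-S_{\tau,\theta}(0)|^{1/2}$. The associated perturbing 2-form $d\eta$, reconstructed from $u$ via the explicit formulas for $f_1,\ldots,f_m$ at the start of this section, satisfies an estimate of the form $\|d\eta\|_{C^r}\le C_r\|u\|_{C^r}$, where $C_r$ depends only on $r$, $\tau$, the bump function $f$, and uniformly bounded derivatives of the coordinate chart $\psi$ (uniformity in $\theta$ again by compactness of $T^cM$).

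The main obstacle is reconciling the $C^2$-estimate given by Proposition \ref{parameters} with the $C^r$-norm demanded on the right-hand side of the claim for arbitrary $r$. I would handle this by inspecting the proof of Proposition \ref{seg.ord.} (and its parametrized version) to observe that the controls may be selected inside a fixed finite-dimensional subspace of $C^\infty_c((0,\tau);\mathbb{R}^k)$, on which all $C^s$-norms are pairwise equivalent, so the $C^2$-bound upgrades to a $C^r$-bound with a constant independent of $\theta$. Combining this with the previous inequality gives $\|d\eta\|_{C^r}\le C'_r K_0\,|X-S_{\tau,\theta}(0)|^{1/2}$. Setting $K$ and $\bar\delta$ to absorb $\mu$, $K_0$, $C'_r$, and the square-root dependence produces the claimed inclusion $B(S_{\tau,\theta}(0),\delta K)\cap Sp(n)\subset S_{\tau,\theta}(B_{C^r}(0,\delta)\cap\mathcal{F})$ for all $\delta\in(0,\bar\delta)$.
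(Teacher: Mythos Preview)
Your proposal is correct and follows essentially the same approach as the paper: both reduce the problem to applying Proposition \ref{parameters} to the bilinear control system on $Sp(n)$ with drift $A^\theta(t)$ and control matrices $\mathcal{E}(ij)$, verify the algebraic hypotheses (\ref{cond1})--(\ref{cond3}) via the explicit bracket computations, and invoke compactness of $T^cM$ for the uniformity in $\theta$. You are in fact more careful than the paper about the passage from the $C^2$-bound in Proposition \ref{parameters} to the $C^r$-ball appearing in the statement, which the paper does not address explicitly.
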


This is the technical result we need to demonstrate our results.

\end{section}

\begin{section}{Franks' lemma for magnetic flows}
	
In this section we will show how to deduce Theorem \ref{franks} from the technical result, Theorem \ref{pert.} of the previous section.

Let $c>0$, $\Omega\in\overline{\Omega}^2(M)$ and $\mathcal{U}$ be an $C^r$-open neighbourhood of $\Omega$, small enough, in $\overline{\Omega}^2(M)$. We set $\theta=(x,v)\in T^cM$, with $\gamma:[0,\tau]\to M$ magnetic geodesic such that $\gamma(0)=x$ and $\dot{\gamma}(0)=v$, where $0<\tau<K(c,\Omega)$.

Considering the definitions of $\mathcal{F}$ and $S_{\tau,\theta}$ of the previous section, under these conditions we can use the Theorem \ref{pert.}. In this case, there is $r>0$ such that 

$$B(S_{\tau,\theta}(0,r)\cap Sp(n)\subset S_{\tau,\theta}\left( (\mathcal{U}-\Omega)\cap\mathcal{F}\right).$$

This proves the Franks' lemma for magnetic flows. An application of this result is as follows:

Suppose that $\theta_t=(\gamma(t),\dot{\gamma}(t))\subset T^cM$ is a closed orbit and let $T>0$ be its minimal period. By Lemma \ref{rad.inj}, $K:=K(c,\Omega)<T_\theta$ and the number of self-intersection points of $\gamma$ is finite. We fix $\tau\in(K/2,K]$, such that $T_\theta=l\tau$, with $l\in\mathbb{N}$, denote $\gamma_i(t)=\gamma(t+i\tau)$. Then we choose $U_i\subset M$ open and disjoint sets for $0\leq i\leq l-1$, such that 

$$U_i\cap\gamma_i((K/2,\tau))\neq\emptyset,\text{ and } U_i\cap U_j=\emptyset,\text{ for every }i\neq j.$$

For $U=\displaystyle\bigcup_{i=0}^{l-1}U_i$, we consider the map 

$$S_\theta:d\eta\in\mathcal{F}\longmapsto d_\theta P(\Omega+d\eta)(T_\theta)=\displaystyle\prod_{i=0}^{l-1}d_{\theta_{i\tau}}P_i(\Omega+d\eta)\in\displaystyle\prod_{i-0}^{l-1}Sp(n),$$

where $P_i$ is the Poincar\'e map from $\Sigma_{i\tau}$ to $\Sigma_{(i+1)\tau}$. Applying $l$ times Theorem \ref{pert.}, we prove the following corollary.

\begin{cor}\label{pert.corl.}
	Given $\Omega\in\overline{\Omega}^2(M)$ and $\mathcal{U}$ an open neighborhood of $\Omega$ in the $C^r$ topology with $r\geq 1$. Suppose that $\theta_t\subset T^cM$ is a closed orbit with minimal period $T_\theta$. Then choosing $\tau,l$ and $U$ as above, the image of the set $(\mathcal{U}-\Omega)\cap\mathcal{F}$ be the map $S_\theta$ is an open neighborhood of $S_\theta(0)$ in $\displaystyle\prod_{i=0}^{l=1}Sp(n)$.
\end{cor}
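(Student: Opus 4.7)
The plan is to exploit the disjointness of the sets $U_i$ to perturb each Poincar\'e map $P_i$ independently, then combine the perturbations.

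First, I would verify that perturbations supported in distinct $U_i$'s do not interfere with each other. Since $\tau \in (K/2,K]$ and by Lemma \ref{rad.inj} the segment $\gamma|_{[i\tau,(i+1)\tau]}$ is injective, and since the closed geodesic $\gamma$ has only finitely many self-intersections, one may shrink each $U_i$ (if necessary) so that $U_i \cap \gamma = U_i \cap \gamma_i((K/2,\tau))$; that is, $U_i$ is only crossed by the lifted orbit $\theta_t$ during the time subinterval that is strictly interior to $[i\tau,(i+1)\tau]$. Consequently, if $d\eta_i \in \mathcal F$ has support in $U_i$, then $\Omega + d\eta_i$ differs from $\Omega$ only along the $i$-th piece of the orbit, so $d_{\theta_{j\tau}}P_j(\Omega+d\eta_i) = d_{\theta_{j\tau}}P_j(\Omega)$ for every $j\neq i$. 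Writing $d\eta = \sum_{i=0}^{l-1} d\eta_i$ with $\mathrm{supp}(d\eta_i)\subset U_i$, one then obtains by a telescoping argument on each factor
\[
d_{\theta_{j\tau}}P_j(\Omega+d\eta) = d_{\theta_{j\tau}}P_j(\Omega+d\eta_j),\qquad j=0,\dots,l-1.
\]

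Next, I would apply Theorem \ref{pert.} separately to each segment $\gamma_i|_{[0,\tau]}$ at the point $\theta_{i\tau}\in T^cM$. For each $i$ this produces constants $\overline{\delta}_i,K_i>0$ such that for every $\delta_i\in(0,\overline{\delta}_i)$ the image under $d\eta_i\mapsto d_{\theta_{i\tau}}P_i(\Omega+d\eta_i)$ of the $C^r$-ball of radius $\delta_i$ in $\mathcal F_i:=\{d\eta\in\mathcal F:\mathrm{supp}(d\eta)\subset U_i,\ d\eta=0\text{ on }\gamma\}$ contains the ball $B(d_{\theta_{i\tau}}P_i(\Omega),\delta_i K_i)\cap Sp(n)$. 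Setting $\overline{\delta}=\min_i \overline{\delta}_i$ and $K=\min_i K_i$, and further shrinking $\overline{\delta}$ so that any sum $\sum_i d\eta_i$ with $\|d\eta_i\|_{C^r}<\overline{\delta}$ still lies in $\mathcal U-\Omega$ (using that $\mathcal U$ is a $C^r$-open neighborhood of $\Omega$ and the $U_i$ are disjoint, hence the $C^r$-norms add in a controlled way), we obtain uniform constants.

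Finally, given a target $(M_0,\dots,M_{l-1})\in\prod_{i=0}^{l-1}Sp(n)$ in the polydisc $\prod_i B(d_{\theta_{i\tau}}P_i(\Omega),\delta K)$ for a sufficiently small $\delta$, I would use Theorem \ref{pert.} on each factor to select $d\eta_i\in\mathcal F_i$ with $\|d\eta_i\|_{C^r}<\delta$ and $d_{\theta_{i\tau}}P_i(\Omega+d\eta_i)=M_i$. Setting $d\eta=\sum_i d\eta_i\in\mathcal F$, the independence step yields $S_\theta(d\eta)=(M_0,\dots,M_{l-1})$, and by construction $d\eta\in(\mathcal U-\Omega)\cap\mathcal F$. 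This proves that $S_\theta((\mathcal U-\Omega)\cap\mathcal F)$ contains the open neighborhood $\prod_i B(d_{\theta_{i\tau}}P_i(\Omega),\delta K)\cap \prod_i Sp(n)$ of $S_\theta(0)$.

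The main obstacle is the separation step: one must genuinely ensure that a perturbation supported in $U_i$ does not distort the dynamics on the other pieces of the orbit. This relies on choosing the $U_i$ small enough to avoid all self-intersections of $\gamma$, which is possible because there are only finitely many such intersections. Once this is granted, the rest of the argument is a straightforward product of $l$ applications of the already-proved local openness statement of Theorem \ref{pert.}.
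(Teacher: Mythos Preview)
Your proposal is correct and follows essentially the same approach as the paper: the paper's entire proof is the single sentence ``Applying $l$ times Theorem \ref{pert.}, we prove the following corollary,'' and you have simply spelled out what this application entails --- the disjointness of the $U_i$ giving independence of the factors, and the product of local openness results yielding a product neighborhood in $\prod_i Sp(n)$. Your explicit care in shrinking the $U_i$ to avoid the finitely many self-intersections of $\gamma$ is a detail the paper leaves implicit but which is indeed needed for the independence step.
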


This result will be useful in the next section where we start with the proof of Kupka-Smale's Theorem.

\end{section}

\begin{section}{Kupka-Smale theorem for magnetic flows}

In this section we begin with the proof of the Theorem \ref{mainthm}, we will prove the first part here and the second part we leave to the next section. Our main reference is Miranda \cite{M1}, who worked in the same result in surfaces.

Let $\mathcal{N}(t)=\mathcal{N}(\theta_t)\subset T_\theta T^cM$ be the subspace

$$\mathcal{N}(t):=\left\lbrace \xi\in T_{\theta_t}T^cM:\left\langle\xi_1,\dot{\gamma}(t) \right\rangle_{\gamma(t)}=0\right\rbrace .$$

If $\xi=X^\Omega(\theta_t)$, then $\xi_1=\dot{\gamma}(t)$, therefore the subspace $\mathcal{N}(t)$ is transversal to $X^\Omega$ along of $\theta_t$, note that $V(\theta_t)\subset\mathcal{N}(t)$. Hence

$$T_{\theta_t}T^cM=\mathcal{N}(t)\oplus\left\langle X^\Omega(\theta_t) \right\rangle. $$

Therefore, the restriction of the twisted form $\omega_\theta$ to $\mathcal{N}(\theta)$ is a non-degenerate 2-form. Note that $\mathcal{N}(\theta)$ does not depend on the 2-form $\Omega$. For $i=2,\ldots,m$, we have that $(e_i(t),0),(0,e_i(t))\in H(\theta_t)\oplus V(\theta_t)$, then $(e_i(t),0),(0,e_i(t))\in\mathcal{N}(t)$ and

\begin{eqnarray*}
	\omega_{\theta_t}((e_i(t),0),(e_j(t)),0)&=&\Omega_{ij},\\
	\omega_{\theta_t}((0,e_i(t)),(0,e_j(t)))&=&0\text{ and}\\
	\omega_{\theta_t}((e_i(t),0),(0,e_j(t)))&=&\delta_{ij}.
\end{eqnarray*}

Thu, we have that

$$(e_2(t),0),\ldots(e_m(t),0),(0,e_2(t)),\ldots(0,e_m(t)),$$

is an basis of $\mathcal{N}(t).$

We say that a closed orbit is \textit{non-degenerate of ordem $k\in\mathbb{N}$}, if the derivate of the $k$th iterated on the linearized Poincar\'e map has no eigenvalues equal 1. Given $a,c>0$ and $k\in\mathbb{N}$, let $\mathcal{G}^k(c,a)$, be the subset of every $\Omega\in\overline{\Omega}^2(M)$ such that all closed orbits of $\phi_t^\Omega|_{T^cM}$, with minimal period $<a$, are non-degenerate of order $k$. Thus the first part of the Theorem \ref{mainthm} can be reduces to following proposition.

\begin{prop}\label{nondeg.}
	Given $c,a>0$ and $r\in\mathbb{N}$, the subset $\mathcal{G}^1(c,a)\subset\overline{\Omega}^2(M)$ is a open and dense subset in the $C^r$ topology. Moreover, for each $\Omega\in\overline{\Omega}^2(M)$, the subset $\mathcal{G}_{[\Omega]}^1(c,a) $ is $C^r$-dense subset of $\overline{\Omega}_{[\Omega]}^2(M)$.
\end{prop}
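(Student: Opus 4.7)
Take $\Omega\in\mathcal{G}^1(c,a)$. I first show the closed orbits of $\phi^\Omega$ in $T^cM$ with minimal period in $(0,a]$ form a finite set. Since $X^\Omega$ has no zeros on the compact set $T^cM$, there is a lower bound $\epsilon=\epsilon(c,\Omega)>0$ on minimal periods: otherwise a sequence of closed orbits with periods shrinking to zero would, by compactness, accumulate on a point with $X^\Omega$ forced to vanish. Together with the non-degeneracy hypothesis, the implicit function theorem applied to the first return map on a local transversal section shows each closed orbit is isolated, hence finite in number, say $\theta^{(1)},\ldots,\theta^{(N)}$. The continuity statement (\ref{fieldcont}) for $X^\Omega$ and its derivative cocycle, together with the implicit function theorem, shows each $\theta^{(i)}$ persists as a non-degenerate closed orbit for every $\Omega'$ in a $C^r$-neighborhood $\mathcal{U}$ of $\Omega$. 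To preclude new closed orbits of period $\leq a$ in $\mathcal{U}$, I choose disjoint tubular neighborhoods $V_i$ of the $\theta^{(i)}$ and cover the compact complement $T^cM\setminus\bigcup_i V_i$ by flow boxes of transit time greater than $a$; this is possible because $\phi^\Omega_t(x)\neq x$ for $(x,t)$ with $x$ outside the tubes and $t\in[\epsilon,a]$, and uniform continuity makes the property persist in $\mathcal{U}$.

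\textbf{Paragraph 2 (Density).}
Fix $\Omega_0\in\overline{\Omega}^2(M)$ and a $C^r$-neighborhood $\mathcal{U}$; I want $\Omega^*\in\mathcal{U}\cap\mathcal{G}^1(c,a)$ with $[\Omega^*]=[\Omega_0]$, which simultaneously yields both assertions of the proposition. By the argument above applied uniformly over $\mathcal{U}$, a lower bound $\epsilon>0$ on minimal periods of closed orbits of $\phi^\Omega|_{T^cM}$ holds for all $\Omega\in\mathcal{U}$. I proceed by an inductive perturbation scheme. Starting from $\Omega_{j-1}\in\mathcal{U}$ which still admits some degenerate closed orbit $\theta^{(j)}$ of minimal period $T_j\in[\epsilon,a]$, apply Corollary \ref{pert.corl.} with $\tau\in(K(c,\Omega_{j-1})/2,K(c,\Omega_{j-1})]$ and disjoint sections $U_i$ surrounding $\gamma^{(j)}$: the image under $S_{\theta^{(j)}}$ of the admissible exact perturbations $\mathcal{F}$ contains an open neighborhood of $S_{\theta^{(j)}}(0)$ in $\prod Sp(n)$. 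Since symplectic matrices without eigenvalue $1$ form an open dense subset of $Sp(n)$, an arbitrarily $C^r$-small exact perturbation $d\eta_j$ can be chosen making the iterated linearized Poincar\'e map at $\theta^{(j)}$ non-degenerate. Using the ``moreover'' clause of Theorem \ref{franks}, $d\eta_j$ is supported in a tube that avoids the orbits $\theta^{(1)},\ldots,\theta^{(j-1)}$ already rendered non-degenerate, so their non-degeneracy is untouched. Set $\Omega_j:=\Omega_{j-1}+d\eta_j\in\mathcal{U}$; exactness of $d\eta_j$ gives $[\Omega_j]=[\Omega_0]$.

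\textbf{Paragraph 3 (Termination, the main obstacle).}
The main obstacle is to show the induction terminates in finitely many steps, rather than merely producing a countable intersection of open dense sets. The strategy combines openness with persistence. After step $j$, the openness proven in Paragraph 1 provides a $C^r$-open $\mathcal{V}_j\subset\mathcal{U}$ containing $\Omega_j$ in which $\theta^{(1)},\ldots,\theta^{(j)}$ persist non-degenerately and, crucially, the complement in $T^cM$ of fixed tubes around them admits a cover by flow boxes of transit time greater than $a$; any closed orbit of period $\leq a$ for $\Omega\in\mathcal{V}_j$ must therefore lie inside one of these tubes. Choosing $\|d\eta_{j+1}\|_{C^r}$ so small that $\Omega_{j+1}\in\mathcal{V}_j$ and summable so the limit $\Omega^*=\Omega_0+\sum_j d\eta_j$ lies in $\mathcal{U}$, the flow-box structure forces the set of still-degenerate closed orbits of $\Omega_j$ of period $\leq a$ to be finite and to shrink at each stage, so the procedure terminates. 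The resulting $\Omega^*$ belongs to $\mathcal{G}^1(c,a)\cap\mathcal{U}$ and, since each $d\eta_j$ is exact, satisfies $[\Omega^*]=[\Omega_0]$, establishing the density of $\mathcal{G}^1(c,a)$ and of $\mathcal{G}^1_{[\Omega_0]}(c,a)$ simultaneously.
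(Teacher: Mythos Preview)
Your openness argument in Paragraph 1 is fine and in fact supplies a detail the paper omits. The density argument, however, has a genuine gap in Paragraph 3.

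The flaw is in the claim that, after step $j$, ``the complement in $T^cM$ of fixed tubes around $\theta^{(1)},\ldots,\theta^{(j)}$ admits a cover by flow boxes of transit time greater than $a$.'' Your Paragraph 1 establishes such a cover only when $\Omega\in\mathcal{G}^1(c,a)$, i.e.\ when \emph{all} closed orbits of period $\le a$ are already non-degenerate and hence finite in number. At step $j$ you merely know that $\theta^{(1)},\ldots,\theta^{(j)}$ are non-degenerate; nothing rules out other closed orbits of period $\le a$ (possibly uncountably many, accumulating on a degenerate one) sitting in the complement of your tubes. Those orbits obstruct any flow-box cover with transit time exceeding $a$. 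Consequently neither finiteness of the remaining degenerate orbits nor the claimed ``shrinking'' is established, and the induction need not terminate. Corollary \ref{pert.corl.} lets you fix the linearized Poincar\'e map along one prescribed closed orbit, but it gives you no handle on the global set of closed orbits, which is what your termination argument requires.

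The paper avoids this difficulty by a different mechanism: it does not perturb orbits one by one. Instead it shows (Lemma \ref{ev}) that the full evaluation map
\[
ev:T^cM\times\mathbb{R}\times\overline{\Omega}^2_{[\Omega]}(M)\longrightarrow T^cM\times T^cM,\qquad (\theta,t,\widehat{\Omega})\longmapsto(\theta,\phi^{\widehat{\Omega}}_t(\theta)),
\]
is transversal to the diagonal at every closed orbit, and then invokes Abraham's parametric transversality theorem to conclude that for a dense set of $\widehat{\Omega}$ the individual map $ev(\widehat{\Omega})$ is transversal to $\Delta$ on $T^cM\times[0,T]$; this transversality is exactly non-degeneracy of \emph{every} closed orbit of period $\le T$, all at once. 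The period range is then pushed from $K$ up to $a$ in finitely many increments of size $K/2$, using Lemma \ref{arb.ck} (which relies on Corollary \ref{pert.corl.}) to upgrade to higher-order non-degeneracy at each stage so that iterates of shorter orbits are already controlled. The role of Franks' lemma in the paper is thus confined to Lemma \ref{arb.ck}, not to the basic density step; the latter needs the global transversality argument that your proof is missing.
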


Let $\theta_t=(\gamma(t),\dot{\gamma}(t))=\phi^\Omega_t(\theta)$ a closed orbit of minimal period $T_\theta>0$ in $T^cM$, for each $i=2,\ldots,m$ consider a function $u_i\in C^\infty(M)$ with support in a neighborhood of $\gamma([0,T])$ and defined

$$f_i(x_1,\ldots,x_m)=\frac{1}{\sqrt{2c}}\int_{0}^{x_1}u_{i}(s)dsx_i$$

in local coordinates. Let $\eta_i:=f_idx_i$ a 1-form in $M$, hence $d\eta_i=u_i(x_1)x_idx_1dx_i$. Let is consider

\begin{eqnarray*}
	\gamma_i(s,t)&:=&\pi\circ\phi_t^{\Omega+s(d\eta_i)}(\theta),\text{ for }s\in(-\varepsilon,\varepsilon),\\
	V_i(t)&:=&\frac{\partial}{\partial s}\Big|_{s=0}\gamma_i(s,t),
\end{eqnarray*}

hence $\gamma(t)=\gamma_i(0,t)$ and $V_i(t)$ is a vector field along the magnetic geodesic $\gamma(t)$. Then

$$Z_i(t):=\frac{\partial}{\partial s}\Big|_{s=0} \phi_t^{\Omega+s(d\eta_i)}(\theta)=\left(V_i(t),\frac{D}{dt}V_i(t)\right)\in H(\theta_t)\oplus V(\theta_t).$$

Since that $d\eta_i|_\gamma\equiv0$, then 

$$\frac{D}{ds}\Big|_{s=0}\left(\frac{D}{dt}\dot{\gamma}_i(s,t)=\right)=\frac{D}{ds}\Big|_{s=0}\left(Y_{\gamma_i(s,t)}(\dot{\gamma}_i(s,t))\right),$$

thus we have that $V_i(t)$ satisfied the Jacobi equation (\ref{jacobi}) for $\Omega+d\eta_i$, note that $e_i(0)=e_i(T)$ for every $i=1,\ldots,m$, thus we have that

$$\left\lbrace \begin{array}{l}
\left(\begin{array}{c}V_{i,\bot}(t)\\V'_{i,\bot}(t)\end{array}\right)'=A(t)\left(\begin{array}{c} V_{i,\bot}(t)\\V'_{i,\bot}(t) \end{array} \right) + u_i(t)\left(\begin{array}{c} 0 \\ e_i \end{array} \right),
\text{ for every }t\in[0,T_\theta] \\ \\
V_{i,\bot}(0)=V'_{i,\bot}(0)=0,
\end{array}\right.$$ 

where $V_{i,\bot}(t)=(V_{i,2}(t),\ldots,V_{i,m}(t))$ and $A(t)$ as before. If $S(t)$ is the fundamental matrix of the correspondent homogeneous equation, then

$$\left(\begin{array}{c}V_{i,\bot}\\V'_{i,\bot}\end{array}\right)(T_\theta)=S(T_\theta)\int_0^{T_\theta}u_i(t)S(t)^{-1}\left(\begin{array}{c} 0 \\ e_i\end{array} \right)dt.$$

Fix $t_0\in(0,T_\theta)$ and $0<\lambda<\varepsilon<T_\theta-t_0$ such that $\gamma([t_0-\varepsilon,t_0+\varepsilon])$ does not have self-intersection points. Let $\delta_\lambda:\mathbb{R}\to\mathbb{R}$ be a $C^\infty$-approximation of the Dirac delta at the point $t_0$. Chose $u_i(t)=\delta'_\lambda(t)$ and $\widetilde{u}_i(t)=\delta_\lambda(t)$, we have that, for $(e_i,0),(0,e_i)\in\mathcal{N}(T_\theta)=\mathcal{N}(\theta)$ 

$$d_\theta P(\Omega+d\eta_i)(T_\theta)(e_i,0)=(V_{i,\bot}(T_\theta),V'_{i,\bot}(T_\theta))$$

and

$$d_\theta P(\Omega+d\eta_i)(T_\theta)(0,e_i)=(\widetilde{V}_{i,\bot}(T_\theta),\widetilde{V}'_{i,\bot}(T_\theta)),$$

since 

$$\frac{d}{dt} \left(S(t)^{-1}\left(\begin{array}{c} 0 \\ e_i\end{array} \right) \right)=-S(t)^{-1} \left(\begin{array}{c} e_i \\ 0 \end{array} \right). $$

Thus we have the following result.

\begin{lem}
	Suppose that $\theta_t^\Omega$ is a closed orbit of minimal period $T_\theta>0$ on $T^cM$. Then there is $\eta_2,\ldots\eta_m$ 1-forms in $M$ such that
	
	$$Z_i:=\frac{d}{ds}\Big|_{s=0}\left(\phi_{T_\theta}^{\Omega+sd\eta}(\theta) \right)\text{ for every }i=2,\ldots,m,$$
	
	are a basis of $\mathcal{N}(\theta)$.
\end{lem}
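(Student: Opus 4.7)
The plan is to realise the claimed variations through the Duhamel formula for the inhomogeneous Jacobi equation derived just before the lemma statement. For an index $i\in\{2,\dots,m\}$ and a smooth scalar function $u_i$ supported in a small subinterval of $(0,T_\theta)$, the $1$-form $\eta_i = f_i\,dx_i$ with $f_i(x) = \tfrac{1}{\sqrt{2c}}\int_0^{x_1}u_i(s)\,ds\;x_i$ produces, through the one-parameter family $\Omega+s\,d\eta_i$, a variation vector $Z_i(t)$ solving
\[
Z_i'(t) = A(t)Z_i(t) + u_i(t)\begin{pmatrix}0\\ e_i\end{pmatrix},\qquad Z_i(0)=0,
\]
where $A(t)$ is the magnetic Jacobi matrix from Lemma~\ref{coordenadas}. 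Variation of constants with fundamental solution $S(\cdot)$ of the homogeneous system then gives
\[
Z_i(T_\theta) \;=\; S(T_\theta)\int_0^{T_\theta} u_i(t)\,S(t)^{-1}\begin{pmatrix}0\\ e_i\end{pmatrix}dt.
\]

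Next I would fix $t_0\in(0,T_\theta)$ for which $\gamma\bigl([t_0-\varepsilon,t_0+\varepsilon]\bigr)$ is embedded (which Lemma~\ref{rad.inj} permits, since $\gamma$ has only finitely many self-intersection times on $[0,T_\theta]$) and use two families of controls supported there: an approximate Dirac delta $\delta_\lambda$ and its derivative $\delta_\lambda'$. Taking $u_i=\delta_\lambda$ and letting $\lambda\to 0$ yields $Z_i(T_\theta)\to S(T_\theta)S(t_0)^{-1}(0,e_i)^\top$, a purely vertical direction transported to $\theta$. Taking $u_i=\delta_\lambda'$ and integrating by parts using the identity $\tfrac{d}{dt}\bigl(S(t)^{-1}(0,e_i)^\top\bigr) = -S(t)^{-1}(e_i,0)^\top$ already recorded in the text yields $Z_i(T_\theta)\to S(T_\theta)S(t_0)^{-1}(e_i,0)^\top$, a horizontal one. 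Since $\{(e_i,0),(0,e_i)\colon i=2,\dots,m\}$ was shown at the beginning of this section to be a basis of $\mathcal{N}(\theta_{t_0})$ (it diagonalises the twisted $\omega$ on that $2n$-plane), and since $S(T_\theta)S(t_0)^{-1}\colon\mathcal{N}(\theta_{t_0})\to\mathcal{N}(\theta)$ is a linear isomorphism, being the derivative of the flow transport between transversal sections of $T^cM$, the $2n$ limiting vectors form a basis of $\mathcal{N}(\theta)$.

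A continuity/openness argument then finishes the proof: the map $u\mapsto Z(T_\theta)$ given by the Duhamel formula above is continuous on $L^2([0,T_\theta];\mathbb{R})$, so for $\lambda$ small enough the $2n$ actual (non-limit) vectors obtained from the smooth bumps $\delta_\lambda,\delta_\lambda'$ and the indices $i=2,\dots,m$ remain linearly independent and hence form a basis; the lemma then follows by taking the corresponding $2n$ one-forms (two per index $i$) as the required $\eta_i$'s. The main technical obstacle is bookkeeping rather than depth: one has to ensure that the supports of the $d\eta_i$ lie inside the special coordinate chart $(U,\psi)$ of Lemma~\ref{coordenadas} so that the coordinate formula for $f_i$ and the crucial vanishing $d\eta_i|_\gamma\equiv 0$ (which keeps $\theta_t$ a closed orbit of $\Omega+s\,d\eta_i$ at the same energy) both make sense. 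This is what forces the choice of $t_0$ and $\varepsilon$ to avoid the finitely many self-intersection times of $\gamma$; once this is arranged, the Duhamel identity and the continuity step are routine.
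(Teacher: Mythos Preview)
Your proposal is correct and follows essentially the same line as the paper: the paper also builds the $\eta_i$ via $f_i(x)=\tfrac{1}{\sqrt{2c}}\int_0^{x_1}u_i(s)\,ds\,x_i$, derives the same inhomogeneous Jacobi system, applies the variation-of-constants formula, and then chooses $u_i=\delta_\lambda'$ and $\widetilde u_i=\delta_\lambda$ together with the identity $\tfrac{d}{dt}\bigl(S(t)^{-1}(0,e_i)^\top\bigr)=-S(t)^{-1}(e_i,0)^\top$ to hit the horizontal and vertical directions. Your write-up is in fact more explicit than the paper's on two points it leaves implicit: the passage from the $\lambda\to 0$ limits back to actual smooth bumps via continuity, and the observation that one really needs $2n$ one-forms (two choices of control per index $i$) to span the $2n$-dimensional space $\mathcal N(\theta)$, which the lemma's statement undercounts.
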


Which implies the following result.

\begin{lem}\label{ev}
	Let $\Omega_0\in\overline{\Omega}^2(M)$ and $\theta_0\in T^cM$ such that $\phi_t^{\Omega_0}(\theta_0)$ is a closed orbit of minimal period $t_0>0$ Then the map
	
	$$\begin{array}{ccccc}
	ev&:&T^cM\times\mathbb{R}\times\overline{\Omega}^2_{[\Omega_0]}(M)&\longrightarrow&T^cM\times T^cM\supset\Delta,\\&&(\theta,t,\Omega)&\longmapsto&(\theta,\phi_t^\Omega(\theta)),\end{array}$$
	
	is transversal to the diagonal $\Delta\subset T^cM\times T^cM$ in the point $(\theta_0,t_0,\Omega_0)$.
	
\end{lem}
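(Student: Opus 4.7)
\begin{proofof}{Lemma \ref{ev} (proposal)}
The plan is to check the standard transversality criterion: one must show that the image of $D_{(\theta_0,t_0,\Omega_0)}ev$ together with $T_{(\theta_0,\theta_0)}\Delta$ spans $T_{(\theta_0,\theta_0)}(T^cM\times T^cM)$. The previous lemma already identifies the perturbations in $\Omega$ that move the endpoint in the transverse directions, so the proof reduces to assembling these pieces.

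First I would unwind the domain. The tangent space to the affine space $\overline{\Omega}^2_{[\Omega_0]}(M)$ at $\Omega_0$ consists of exact $2$-forms $d\eta$ with $\eta\in\Omega^1(M)$. For a vector $(\xi,s,d\eta)$ in the tangent space of the domain of $ev$, the chain rule gives
$$D_{(\theta_0,t_0,\Omega_0)}ev\cdot(\xi,s,d\eta)=\Bigl(\xi,\;d_{\theta_0}\phi_{t_0}^{\Omega_0}(\xi)+sX^{\Omega_0}(\theta_0)+V(d\eta)\Bigr),$$
where $V(d\eta):=\dfrac{d}{ds'}\big|_{s'=0}\phi_{t_0}^{\Omega_0+s'd\eta}(\theta_0)$. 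A convenient complement to $T_{(\theta_0,\theta_0)}\Delta=\{(v,v):v\in T_{\theta_0}T^cM\}$ is $\{0\}\oplus T_{\theta_0}T^cM$, so it suffices to reach every vector of the form $(0,w)$ modulo $T\Delta$.

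Choosing $\xi=0$ (so that the correcting diagonal vector is also $0$), the image becomes
$$\{\,sX^{\Omega_0}(\theta_0)+V(d\eta)\ :\ s\in\mathbb R,\ \eta\in\Omega^1(M)\,\}\subset T_{\theta_0}T^cM.$$
Using the splitting $T_{\theta_0}T^cM=\mathbb R\,X^{\Omega_0}(\theta_0)\oplus\mathcal N(\theta_0)$ recalled at the beginning of the section, the flow direction is obviously reached by varying $s$. For the complementary subspace $\mathcal N(\theta_0)$, I apply the lemma immediately preceding this one: it produces $1$-forms $\eta_2,\ldots,\eta_m$ such that the vectors $Z_i=V(d\eta_i)$ form a basis of $\mathcal N(\theta_0)$. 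Taking linear combinations, $V(d\eta)$ alone sweeps out $\mathcal N(\theta_0)$ as $\eta$ varies, and together with $sX^{\Omega_0}(\theta_0)$ we recover all of $T_{\theta_0}T^cM$.

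The main step is thus the previous lemma, which supplies the surjectivity onto $\mathcal N(\theta_0)$; once that is in hand the transversality calculation above is essentially formal. The only delicate point to double-check is that every $V(d\eta_i)$ produced there genuinely lies in $T_{\theta_0}T^cM$, which is automatic since the perturbed flows $\phi_t^{\Omega_0+sd\eta_i}$ preserve the Hamiltonian $H$ and hence the energy level $T^cM$; this guarantees the variational vector fields $V(d\eta)$ remain tangent to $T^cM$ and the argument closes.
\end{proofof}
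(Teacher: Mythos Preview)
Your proposal is correct and matches the paper's approach: the paper does not spell out a proof of Lemma~\ref{ev} at all, it simply records it as an immediate consequence of the preceding lemma (``Which implies the following result''), and what you wrote is precisely the standard transversality computation that unpacks this implication. The decomposition $T_{\theta_0}T^cM=\mathbb R\,X^{\Omega_0}(\theta_0)\oplus\mathcal N(\theta_0)$, the use of the $t$-variable to cover the flow direction, and the appeal to the previous lemma for the $\mathcal N(\theta_0)$ part are exactly what the paper has in mind.
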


The Theorem \ref{pert.} and its Corollary \ref{pert.corl.}, together with the previous lemmas implies the following result.

\begin{lem}\label{arb.ck}
	Let $\Omega_0\in\mathcal{G}^1(c,a)$ and $k\in\mathbb{N}$. Then there exists a $\Omega\in\mathcal{G}_{[\Omega_0]}^k(c,a)$, such that $\Omega$ is arbitrarily $C^r$-close to $\Omega_0$.
\end{lem}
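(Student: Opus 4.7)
The plan is to perturb $\Omega_0$ one closed orbit at a time, exploiting Corollary \ref{pert.corl.} to push each Poincar\'e map into the open dense subset of $Sp(n)$ on which the $k$-th iterate has no unit eigenvalue. Because $\Omega_0\in\mathcal{G}^1(c,a)$, non-degeneracy of the linearized Poincar\'e map together with the implicit function theorem (equivalently Lemma \ref{ev} with $k=1$) shows that every closed orbit in $T^cM$ of minimal period less than $a$ is isolated; combined with compactness of $T^cM$ and the period bound, only finitely many such orbits $\theta^1_t,\dots,\theta^N_t$ exist, with minimal periods $T_1,\dots,T_N<a$. A standard persistence argument (using continuity of $\Omega\mapsto X^\Omega$ from \eqref{fieldcont} and a compactness-of-orbits argument to rule out the appearance of new orbits) shows that $\mathcal{G}^1(c,a)$ is $C^r$-open; thus every sufficiently small $C^r$-perturbation $\Omega$ of $\Omega_0$ in $\overline{\Omega}^2_{[\Omega_0]}(M)$ still lies in $\mathcal{G}^1(c,a)$, its $N$ closed orbits in $T^cM$ of minimal period less than $a$ depend continuously on $\Omega$, and no new such orbits are created.

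Let $\mathcal{B}_k:=\{M\in Sp(n):\det(M^k-I)=0\}$. This is a proper real-algebraic subvariety of $Sp(n)$, because the polynomial $\det(M^k-I)$ does not vanish identically (for instance it is nonzero on symplectic matrices of the form $\mathrm{diag}(\lambda,\lambda^{-1},1,\dots,1)$ with $\lambda$ not a $k$-th root of unity). Hence $Sp(n)\setminus\mathcal{B}_k$ is open and dense, and we need only arrange the linearized Poincar\'e maps of all the $\theta^i_t$ to lie in this complement.

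Fix $\varepsilon>0$ smaller than the persistence radius from the first paragraph. For each $i$, apply Corollary \ref{pert.corl.} to $\theta^i_t$. The corollary permits us to choose the supports $U^i\subset M$ of the perturbations as a union of short tubular pieces around small arcs of $\pi(\theta^i)$; since distinct closed orbits project to distinct closed curves in $M$ (by uniqueness of magnetic geodesics from initial data), we can place each $U^i$ on an arc of $\pi(\theta^i)$ disjoint from $\bigcup_{j\ne i}\pi(\theta^j)$ and shrink the tube widths to make $U^1,\dots,U^N$ pairwise disjoint. The corollary then provides, for each $i$, a family $\mathcal{F}_i$ of exact $2$-forms supported in $U^i$ whose image under $S_{\theta^i}$ is an open neighbourhood of its value at $0$. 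By density of $Sp(n)\setminus\mathcal{B}_k$, pick $d\eta_i\in\mathcal{F}_i$ with $\|d\eta_i\|_{C^r}<\varepsilon/N$ such that $d_{\theta^i}P(\Omega_0+d\eta_i)(T_i)\notin\mathcal{B}_k$. Set $\Omega:=\Omega_0+\sum_{i=1}^N d\eta_i$. Disjointness of supports yields $d_{\theta^i}P(\Omega)(T_i)=d_{\theta^i}P(\Omega_0+d\eta_i)(T_i)\notin\mathcal{B}_k$ for each $i$; exactness preserves the cohomology class; and $\|\Omega-\Omega_0\|_{C^r}<\varepsilon$. By the choice of $\varepsilon$, the closed orbits of $\phi^\Omega$ of minimal period $<a$ are the small deformations of the $\theta^i_t$, and closedness of $\mathcal{B}_k$ ensures they still avoid it. Hence $\Omega\in\mathcal{G}^k_{[\Omega_0]}(c,a)$.

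The principal difficulty is the coupling between orbits: a perturbation in a tube around $\theta^i$ could in principle modify the Poincar\'e maps of the other $\theta^j$. This is resolved by choosing the tubular supports $U^i$ to be pairwise disjoint in $M$, exploiting the flexibility of Corollary \ref{pert.corl.} in choosing where to place the support along each orbit together with the fact that distinct closed magnetic geodesics of bounded period project to distinct curves in $M$. A secondary subtle point, the non-appearance of new closed orbits of period $<a$ under the perturbation, is exactly the openness half of Proposition \ref{nondeg.}, proved in parallel by a compactness argument on the set of closed orbits of bounded period.
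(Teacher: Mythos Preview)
Your overall strategy matches what the paper intends (the paper gives no detailed proof, only asserting that the lemma follows from Theorem~\ref{pert.}, Corollary~\ref{pert.corl.} and the preceding lemmas): reduce to finitely many non-degenerate closed orbits, observe that $\mathcal B_k=\{M\in Sp(n):\det(M^k-I)=0\}$ is a proper real-algebraic subvariety, and use the Franks-type controllability of Corollary~\ref{pert.corl.} to push each linearized Poincar\'e map into the open dense set $Sp(n)\setminus\mathcal B_k$. The structure and the main ingredients are correct.

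There is, however, a gap in the decoupling step. Your claim that ``distinct closed orbits project to distinct closed curves in $M$ (by uniqueness of magnetic geodesics from initial data)'' is not justified by the reason you give and can in fact fail: uniqueness only separates orbits in $T^cM$, not their projections to $M$. Whenever a closed orbit $\gamma$ happens to satisfy $Y_\gamma(\dot\gamma)\equiv 0$ (in particular for the geodesic case $\Omega_0=0$, which the hypotheses allow), the time-reversed curve $t\mapsto\gamma(-t)$ is a \emph{distinct} closed orbit in $T^cM$ with \emph{identical} projection to $M$, and then no placement of tubular supports in $M$ can prevent the perturbation designed for one from altering the Poincar\'e map of the other. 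The remedy is immediate and avoids any hypothesis on projections: process the orbits one at a time rather than simultaneously. After pushing $d_{\theta^1}P$ into the open set $Sp(n)\setminus\mathcal B_k$, choose the perturbation for $\theta^2$ small enough (possible because $\mathcal B_k$ has empty interior, so arbitrarily small exact forms suffice in Corollary~\ref{pert.corl.}) that, by continuity of $\Omega\mapsto d_\theta P(\Omega)$, the Poincar\'e map of the continuation of $\theta^1$ remains in that open set; then iterate over the remaining orbits.
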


\textbf{Proof of the Proposition \ref{nondeg.}. Density:} 

Let $\Omega\in\overline{\Omega}^2(M)$. Take  $k=k(a,\Omega)\in\mathbb{N}$ such that $(k-1)(K/2)<a\leq k(K/2)$ and $\mathcal{U}$ a $C^r$ open neighborhood of $\Omega$ such that, if $\widehat{\Omega}\in\mathcal{U}$, then $\|\widehat{\Omega}\|_{C^0}<\|\Omega\|_{C^0}+1$, thus $\mathcal{U}\subset\mathcal{G}^l(c,K)$, for every $l\in\mathbb{N}$, in particular

$$\Omega\in\mathcal{U}\subset\mathcal{G}^1(c,K)$$

\begin{itemize}
	\item Consider the map
	
		$$\begin{array}{ccccc}
		ev&:&T^cM\times\mathbb{R}\times\mathcal{U}_{[\Omega]}&\longrightarrow&T^cM\times T^cM\supset\Delta,\\&&(\theta,t,\widehat{\Omega})&\longmapsto&(\theta,\phi_t^{\widehat{\Omega}}(\theta)).\end{array}$$
		
		The Lemma \ref{ev}, implies that, if $ev(\theta_0,t_0,\Omega_0)\in\Delta$, then $ev\pitchfork_{(\theta_0,t_0,\Omega_0)}\Delta$. Hence $ev(\Omega_0)\pitchfork_{T^cM\times[0,3K/2]}\Delta$. So due to Abraham's Theorem of Transversality, we have that the set of every $\Omega_0\in\mathcal{U}_{[\Omega]}$ such that $ev(\Omega_0)\pitchfork_{T^cM\times[0,3K/2]}\Delta$ is dense in $\mathcal{U}_{[\Omega]}$. Then, there is $\widehat{\Omega}_1\in\mathcal{U}_{[\Omega]}$ such that
		
		$$ev(\widehat{\Omega}_1)\pitchfork_{T^cM\times[0,3K/2]}\Delta\text{ and }\|\Omega-\widehat{\Omega}\|_{C^r}<\frac{\varepsilon}{2k}.$$
		
		Lemma \ref{arb.ck}, implies that there is $\Omega_1\in\mathcal{G}^k_{[\Omega]}(c,3K/2)$ with $\|\Omega_1-\widehat{\Omega}_1\|_{C^r}<\frac{\varepsilon}{2k}$. Hence $\|\Omega-\Omega_1\|_{C^r}<\frac{\varepsilon}{k}$.\\
		
	\item We can take $\Omega_1\in\mathcal{U}_{[\Omega]}$ and consider $\mathcal{U}_1=\mathcal{U}\cap\mathcal{G}^k_{[\Omega]}(c,3K/2)$ and
	
		$$\begin{array}{ccccc}
		ev&:&T^cM\times\mathbb{R}\times\mathcal{U}_1&\longrightarrow&T^cM\times T^cM\supset\Delta,\\&&(\theta,t,\widehat{\Omega})&\longmapsto&(\theta,\phi_t^{\widehat{\Omega}}(\theta)).\end{array}$$
		
		Suppose that $ev(\theta_0,t_0,\Omega_0)\in\Delta$. Let $T_0$ be the minimal period of the closed orbit $\phi_t^{\Omega_0}(\theta_0)$. If $T_0\leq 3K/2$ then $ev(\Omega_0)\pitchfork_{(\theta_0,lT_0)}\Delta$, for every $1\leq l\leq k$. Since $\mathcal{U}_1\subset\mathcal{U}_{[\Omega]}$, we have that $K<T_0$ and $t_0<kT_0$. Therefore, $ev(\Omega_0)\pitchfork_{(\theta_0,t_0)}\Delta$. If $T_0\in(3K/2,2K]$ then $t_0=T_0$ and, by Lemma \ref{ev} have that $ev(\Omega_0)\pitchfork_{(\theta_0,t_0,\Omega_0)}\Delta$, hence $ev(\Omega_0)\pitchfork_{T^cM\times [0,2K]}\Delta$. So due to Abraham's Theorem of Transversality, we have that there is $\widehat{\Omega}_2\in\mathcal{U}_1$, such that 
		
		$$ev(\widehat{\Omega}_2)\pitchfork_{T^cM\times[0,2k]}\Delta\text{ and }\|\Omega_1-\widehat{\Omega}_2\|_{C^r}<\frac{\varepsilon}{2k}.$$
		
		Lemma \ref{arb.ck}, implies that there is $\Omega_2\in\mathcal{G}^k_{[\Omega]}(c,2K)$ with $\|\Omega_2-\widehat{\Omega}_2\|_{C^r}<\frac{\varepsilon}{2k}$. Hence $\|\Omega_1-\Omega_2\|_{C^r}<\frac{\varepsilon}{k}$.\\
	
	\item Repeating the same arguments for $2<l\leq k-1$, we obtain 				$\Omega_l\in\mathcal{G}^k_{[\Omega]}(c,l(K/2)+K)$, with 	$\|\Omega_l-\Omega_{l-1}\|_{C^r}<\varepsilon/k$.
	
\end{itemize}

Finally, since $\mathcal{G}^k_{[\Omega]}(c,(k-1)K/2+K)\subset \mathcal{G}^1_{[\Omega]}(c,a)$ and $\|\Omega-\Omega_k\|_{C^r}<\varepsilon$, we have that $\Omega\in\overline{\mathcal{G}^1_{[\Omega]}(c,a)}$.

\end{section}

\begin{section}{Heterocinlic transversal points}

For each $c,a>0$, we define $\mathcal{K}(c,a)$ the set of all $\Omega\in\mathcal{G}^1(c,a)$ such that, for every hyperbolic closed orbits $\theta_t,\vartheta_t\subset T^cM$, of period $<a$, $W^u_a(\theta_t)\pitchfork_{T^cM}W^s_a(\vartheta_t)$. To complete the proof of Theorem \ref{mainthm} is sufficient to prove that, for every $\Omega\in\overline{\Omega}^2(M)$, the set $\mathcal{K}_{[\Omega]}(c,a)$
 is dense in $\mathcal{G}^1_{[\Omega]}(c,a)$. It is enough to prove the existence of a local perturbation for $\Omega$ that preserve the orbits $\theta_t$ and $\vartheta_t$ and such that the perturbation local manifolds $W^u_a(\theta_t)$ and $W^s_a(\vartheta_tate)$ are transversal in a fundamental domain of $W^u_a(\theta_t)$.
 
 \begin{lem}\label{Wu}
 	Let $\sigma\in W^u_a(\theta_t)\subset T^cM$ be such that the restriction $\pi|_{W^u(\theta_t)}$ is a diffeomorphism in a neighbourhood $U\subset W^u(\theta_t)$ of the point $\sigma$. Let $V\subset\overline{V}\subset U$ be sufficiently small neighbourhood of $\sigma$ in $W^u(\theta_t)$. Then there is an exact 2-form $d\eta$, with norm arbitrarily small in the $C^r$ topology $(1\leq r\leq\infty)$, such that 
 	
 	\begin{enumerate}
 		\item $Supp(d\eta)\subset\pi(U)$,
 		\item $\theta_t$ and $\vartheta_t$ are hyperbolic closed orbits of the magnetic flow associated with $\widehat{\Omega}=\Omega+d\eta$,
 		\item $\sigma\in\widehat{W^u_a(\theta_t)}$, where $\widehat{W^u_a(\theta_t)}$ denotes the local stable manifold of $\theta_t$ for the flow $\phi_t^{(\Omega+d\eta)}$,
 		\item the connected component of $\widehat{W^u_a(\theta_t)}\cap V$ that contains the point $\sigma$ and $\widehat{W^s(\theta_t)}$ are transversal.
 	\end{enumerate}
 
\end{lem}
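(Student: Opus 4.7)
The plan is to construct $d\eta$ supported in a small flow-box strictly upstream of $\sigma$ along the backward orbit through $\sigma$, so that the magnetic closed orbits $\theta_t,\vartheta_t$ and their hyperbolicity are trivially preserved; and then to exploit Theorem~\ref{pert.} on that short arc to tilt $T_\sigma\widehat{W^u_a(\theta_t)}$ inside the Lagrangian Grassmannian of $\mathcal{N}(\sigma)$ into a position transversal to $T_\sigma\widehat{W^s(\vartheta_t)}$, which near $\sigma$ will be unchanged. (I read the $\widehat{W^s(\theta_t)}$ in (4) as a typo for $\widehat{W^s(\vartheta_t)}$, consistent with the definition of $\mathcal{K}(c,a)$.)

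First I would shrink $V\subset\overline{V}\subset U$ so that $\pi(\overline{U})$ is a compact neighborhood of $\pi(\sigma)$ disjoint from open neighborhoods of the projected closed geodesics $\pi(\theta_t)$ and $\pi(\vartheta_t)$. This is possible because $\sigma$ is a heteroclinic point while both closed orbits are compact. Then any exact $2$-form $d\eta$ with $\mathrm{Supp}(d\eta)\subset\pi(U)$ vanishes in a neighborhood of both closed geodesics, so the Lorentz force $Y$ and its derivatives along them are unchanged, hence the magnetic geodesic equation~(\ref{mag.geo}) and the Jacobi equation~(\ref{jacobi}) along $\theta_t$ and $\vartheta_t$ are untouched. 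This immediately yields (1), and the linearized Poincar\'e maps of $\theta_t$ and $\vartheta_t$ stay the same, giving (2).

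Next, pick $T>0$ large and $\sigma_0\in W^u(\theta_t)$ in the local unstable manifold close to $\theta_t$ with $\phi_T^\Omega(\sigma_0)=\sigma$. By the diffeomorphism hypothesis on $\pi|_U$, there exist $0<\tau<\varepsilon$ such that the arc $\alpha:=\{\phi_t^\Omega(\sigma_0):T-\varepsilon\leq t\leq T-\varepsilon+\tau\}$ lies in $V$, is simple, and ends strictly before reaching $\sigma$. Set $\theta'=\phi_{T-\varepsilon}^\Omega(\sigma_0)$. Theorem~\ref{pert.} applied to $(\theta',\tau)$ yields constants $\overline{\delta},K>0$ and, for each $\delta<\overline{\delta}$, exact $2$-forms $d\eta\in\mathcal{F}$ of $C^r$-norm less than $\delta$, supported in a chart around $\alpha$ (hence inside $\pi(V)\subset\pi(U)$) and vanishing identically on $\alpha$, such that $d_{\theta'}P(\Omega+d\eta)(\tau)$ ranges over every element of $B(d_{\theta'}P(\Omega)(\tau),\delta K)\cap Sp(n)$. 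Because $d\eta$ vanishes on $\alpha$, the orbit of $\sigma_0$ under $\phi_t^{\Omega+d\eta}$ agrees with that under $\phi_t^\Omega$ up to time $T$; combined with invariance of the local unstable manifold at $\theta_t$ (the support avoids a neighborhood of $\theta_t$), this gives $\sigma\in\widehat{W^u_a(\theta_t)}$, which is (3). For the same flow-box reason, the forward orbit of $\sigma$ under $\phi_t^{\Omega+d\eta}$ leaves $V$ at $t=0^+$ and never re-enters $\mathrm{Supp}(d\eta)$, so $\widehat{W^s(\vartheta_t)}$ coincides with $W^s(\vartheta_t)$ in a small neighborhood of $\sigma$.

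For (4), write $T_\sigma\widehat{W^u_a(\theta_t)}=d_{\sigma_0}\phi_T^{\Omega+d\eta}\bigl(T_{\sigma_0}W^u(\theta_t)\bigr)$; after quotienting by the preserved flow direction $\langle X^\Omega(\sigma)\rangle$ and working in the symplectic transversal $\mathcal{N}(\sigma)$, this image is a Lagrangian of $\mathcal{N}(\sigma)$. The chain rule, together with the surjectivity provided by Theorem~\ref{pert.}, shows that varying $d\eta$ moves this Lagrangian through an open neighborhood in the Lagrangian Grassmannian of $\mathcal{N}(\sigma)$, while the fixed Lagrangian $T_\sigma W^s(\vartheta_t)/\langle X^\Omega(\sigma)\rangle$ remains frozen. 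Since transversality of two Lagrangians is an open and dense condition in the Grassmannian, a suitably small $d\eta$ produces $T_\sigma\widehat{W^u_a(\theta_t)}+T_\sigma\widehat{W^s(\vartheta_t)}=T_\sigma T^cM$; by openness, transversality extends to the connected component of $\widehat{W^u_a(\theta_t)}\cap V$ containing $\sigma$, which is (4). The main obstacle I anticipate is the simultaneous bookkeeping: the same $d\eta$ must leave $\theta_t$, $\vartheta_t$ and the trace of $\widehat{W^s(\vartheta_t)}$ near $\sigma$ untouched while still moving $T_\sigma\widehat{W^u_a(\theta_t)}$ surjectively through a neighborhood in the Lagrangian Grassmannian; placing $\mathrm{Supp}(d\eta)$ in a flow box strictly in the past of $\sigma$, together with the first-order controllability embodied in Theorem~\ref{pert.}, is exactly the mechanism that achieves all three requirements at once.
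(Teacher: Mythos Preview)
Your route is genuinely different from the paper's. The paper does not invoke Theorem~\ref{pert.} at all for this lemma; instead it relies on the final (unnamed) lemma of Section~8, which works in Darboux coordinates adapted to the Lagrangian $W^u(\theta_t)\subset H^{-1}(c)$ and builds by hand a one-parameter family $\mathcal{N}_s$ of $n$-dimensional submanifolds deforming $W^u$: one sets $f_s(t,x)=(f^1_s,s_2\alpha(x_2)\beta(t),\ldots,s_n\alpha(x_n)\beta(t))$, solves $H(t,x,f_s)=c$ for $f^1_s$, checks $\int_{\mathcal{N}_s}i_s^*\omega=0$, and then applies Sard's theorem to the map $\rho|_{\mathcal{N}_0}$, $\rho(y)=(y_2,\ldots,y_n)$, to obtain $s_k\to 0$ with $\mathcal{N}_{s_k}\pitchfork W^s(\vartheta_t)$ on a whole box $C$. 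The passage from $\mathcal{N}_{s_k}$ to an exact $d\eta$ realizing it as the new unstable manifold is left implicit (it follows the surface argument of Miranda). What this buys is transversality on an entire neighbourhood in one stroke, via a genuine parametric family and Sard, rather than pointwise control of a single tangent space.

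Your control-theoretic idea is attractive and in the spirit of the paper's main theorems, but as written it has two soft spots. First, Theorem~\ref{pert.} only lets you steer $d_{\theta'}P(\Omega+d\eta)(\tau)$, hence only $T_\sigma\widehat{W^u}$; your ``by openness'' step presumes $\sigma$ is itself a heteroclinic point and that $V$ can be shrunk so that every intersection with $W^s(\vartheta_t)$ lies on the orbit of $\sigma$. The lemma as stated asks for transversality on the whole component of $\widehat{W^u}\cap V$, and for that you really need a parametric transversality/Sard argument over the family of $d\eta$'s, not just openness at a single point --- exactly what the paper's construction provides. Second, your claim that $\widehat{W^s(\vartheta_t)}$ is frozen near $\sigma$ requires that forward $\Omega$-orbits of points of $W^s(\vartheta_t)$ near $\sigma$ avoid $\mathrm{Supp}(d\eta)\subset M$; since the support is a neighbourhood of $\pi(\alpha)$ in $M$ (not in $TM$), the projected forward orbit could revisit it even though the orbit in $TM$ does not, and the same caveat applies to the backward orbit from $\sigma_0$ to $\theta'$. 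These issues are fixable (shrink the tube, choose the arc carefully, and replace the pointwise argument by Sard over the ball in $Sp(n)$ furnished by Theorem~\ref{pert.}), but they are precisely the bookkeeping you flagged as ``the main obstacle'', and they are not yet discharged.
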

 	
From the general theory of the Lagrangians systems we know that, $W^s(\theta_t),W^s(\theta_t)\subset T^cM$ are Lagrangians submanifolds of $TM$, with the symplectic twist form $\omega(\Omega)$
 	
\begin{lem}{(Twist property of the vertical bundle)}\label{vertical}
	Let $\theta\in M$ and $E\subset T_\theta TM$ be a Lagrangian subspaces for the symplectic twist form. The subset given by 
 		
 		$$\{t\in\mathbb{R}:d_\theta\phi_t^\Omega(E)\cap V(\phi_t^\Omega(\theta))\neq\{0\}\}$$
 		
 	is discrete.
\end{lem}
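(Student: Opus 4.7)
The plan is to reduce the question to a statement about Jacobi fields along the magnetic geodesic $\gamma(t) := \pi\circ\phi_t^\Omega(\theta)$, and then show that every crossing time is isolated via a leading-order expansion of a Wronskian-type determinant. The Lagrangian hypothesis will be used in exactly one place: to establish the non-vanishing of the leading coefficient of that expansion.

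First, using the identification $d_\theta\phi_t^\Omega(\xi)=(J_\xi(t),J'_\xi(t))$ from Section~2, the condition $d_\theta\phi_t^\Omega(E)\cap V(\phi_t^\Omega\theta)\ne\{0\}$ translates to the existence of $\xi\in E\setminus\{0\}$ with $J_\xi(t)=0$. Since the Jacobi equation is a linear second order ODE, $J_\xi(t_0)=0$ together with $J'_\xi(t_0)=0$ forces $J_\xi\equiv 0$ and hence $\xi=0$. Consequently, on $W_0:=\{\xi\in E:J_\xi(t_0)=0\}$ the assignment $\xi\mapsto J'_\xi(t_0)$ is injective. Next, fix a crossing time $t_0$, let $k:=\dim W_0$, and pick a basis $\xi_1,\ldots,\xi_m$ of $E$ whose first $k$ vectors span $W_0$ while $J_{k+1}(t_0),\ldots,J_m(t_0)$ are linearly independent; this is possible because the image of $\xi\mapsto J_\xi(t_0)$ has codimension equal to $\dim W_0$. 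Setting $\Phi(t):=(J_1(t)\mid\cdots\mid J_m(t))$, the crossing condition is $\det\Phi(t)=0$, and multilinearity together with the Taylor expansions $J_i(t)=(t-t_0)J'_i(t_0)+O((t-t_0)^2)$ for $i\le k$ and $J_j(t)=J_j(t_0)+O(t-t_0)$ for $j>k$ gives
\begin{equation*}
\det\Phi(t) = (t-t_0)^k\,\det\!\bigl(J'_1(t_0),\ldots,J'_k(t_0),J_{k+1}(t_0),\ldots,J_m(t_0)\bigr) + O\bigl((t-t_0)^{k+1}\bigr).
\end{equation*}
Hence $t_0$ is isolated provided the leading determinant is non-zero.

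The core step is to prove this non-vanishing, and it is here that the Lagrangian hypothesis enters. Suppose $\sum_{i\le k}a_iJ'_i(t_0)+\sum_{j>k}b_jJ_j(t_0)=0$. In $E_{t_0}$ one has $d_\theta\phi_{t_0}^\Omega(\xi_i)=(0,J'_i(t_0))$ for $i\le k$ and $d_\theta\phi_{t_0}^\Omega(\xi_j)=(J_j(t_0),J'_j(t_0))$ for $j>k$. Evaluating the twisted form $\omega=\omega_0+\pi^*\Omega$ on such pairs kills the $\pi^*\Omega$ contribution (one of the horizontal parts is zero), so the Lagrangian identity $\omega(d\phi_{t_0}\xi_i,d\phi_{t_0}\xi_j)=0$ reduces to
\[
\langle J'_i(t_0),J_j(t_0)\rangle_{\gamma(t_0)} = 0 \quad\text{for every } i\le k,\ j>k.
\]
Pairing the hypothetical relation against $J_l(t_0)$ via the Riemannian inner product, the $a$-terms vanish by this identity and one is left with $\sum_{j>k}b_j\langle J_j(t_0),J_l(t_0)\rangle=0$ for every $l>k$; the Gram matrix of the linearly independent vectors $J_{k+1}(t_0),\ldots,J_m(t_0)$ is invertible, so all $b_j=0$. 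The residual relation $\sum_{i\le k}a_iJ'_i(t_0)=0$ then says that the vector $\sum a_i\xi_i\in W_0$ lies in the kernel of $\xi\mapsto J'_\xi(t_0)$, hence vanishes by the first step, so also all $a_i=0$.

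The main obstacle is exactly this linear-independence claim: for a generic smooth family of $m$-planes two columns of $\Phi$ could collapse together at $t_0$ and $\det\Phi$ could vanish to higher order than $k$, or even identically. What rescues the argument is the symmetry $\langle J'_i(t_0),J_j(t_0)\rangle=\langle J'_j(t_0),J_i(t_0)\rangle$ forced by the Lagrangian condition (equivalently, the crossing form $\xi\mapsto-\|J'_\xi(t_0)\|^2$ on $W_0$ is negative definite), combined with ODE uniqueness for the Jacobi equation. Once these are in hand, everything else reduces to bookkeeping and a Taylor expansion.
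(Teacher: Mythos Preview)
The paper states this lemma without proof, treating it as a known fact from the general theory of Lagrangian/Hamiltonian systems (cf.\ the remark immediately preceding the statement). Your argument is a correct and self-contained proof: the translation to Jacobi fields via $d_\theta\phi_t^\Omega(\xi)=(J_\xi(t),J'_\xi(t))$, the choice of adapted basis for $E$ relative to the kernel $W_0$, the Taylor expansion of $\det\Phi$, and the use of the Lagrangian condition to obtain the orthogonality $\langle J'_i(t_0),J_j(t_0)\rangle=0$ for $i\le k<j$ (so that the leading determinant is non-zero) are all standard and correctly executed. This is essentially the classical crossing-form argument for a Lagrangian curve of planes along a Hamiltonian flow; since the paper provides no proof to compare against, there is nothing further to contrast.
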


\textbf{Proof of the density of $\mathcal{K}(c,a)$:} 

Let $\mathcal{D}\subset W^u_a(\theta)$ be a fundamental domain of $W^u_a(\theta)$ and $\sigma\in\mathcal{D}$. By the inverse function theorem we know that $\pi|_{W^u(\theta)}$ is a local diffeomorphism in $\sigma$ if, and only if, $T_\sigma W^u(\theta)\cap V(\sigma)=\{0\}$. As $W^u(\theta)$ is a Lagrangian submanifold we have, from Lemma \ref{vertical}, that $\{t\in\mathbb{R}:d_\sigma\phi_t^\Omega(T_\sigma W^u(\theta))\cap V(\phi_t^\Omega(\sigma))\neq\{0\}\}$, is discrete. Then there exists $t(\sigma)>0$ arbitrarily close to 0, such that $\pi|_{W^u(\theta)}$ is a diffeomorphism in a neighborhood $U_{t(\sigma)}\subset W^u(\theta)$ of the point $\phi_{t(\sigma)}^\Omega(\sigma)$. Since $\Omega\in\mathcal{G}^1(c,a)$, we can assume that $\pi(\phi^\Omega_{-t(\sigma)}(U_{t(\sigma)}))$ does not intersect any closed orbit of period $\leq a$. Let $W_\sigma\subset\mathcal{D}$ be a neighborhood of $\sigma$ such that $\sigma\in W_\sigma\subset\overline{W}_\sigma\subset\phi^\Omega_{-t(\sigma)}(U_{t(\sigma)})$. Then, we can take a finite number of points $\sigma_1,\ldots,\sigma_l$ such that the neighborhood $W_1,\ldots,W_l$ cover the fundamental domain $\mathcal{D}$ and such that the points $\phi^\Omega_{t_i}(\sigma_i)$ and the neighborhoods $\mathcal{V}_i=\phi^\Omega_{t_i}(W_i)\subset U_i$ satisfy the hypothesis in Lemma \ref{Wu}, for each $i=1,\ldots,l$.

Applying Lemma \ref{Wu} to $\phi^\Omega_{t_1}(\sigma_1)\in\mathcal{V}_1\subset\overline{\mathcal{V}}_1\subset U_1$, we obtain an exact 2-form $d\eta_1\in\overline{\Omega}^2(M)$, with $C^r$-norm arbitrarily small, such that $Supp(d\eta)\subset\pi(U_1)$ and the connected component of $\widehat{W^u(\theta_t)}\cap\overline{\mathcal{V}}_1$ that contain $\phi^\Omega_{t_1}(\sigma_1)$ is transversal to $\widehat{W^s(\vartheta_t)}$. Since $\mathcal{G}^1(c,a)$ is open in $\overline{\Omega}^2(M)$, we can assume that $\Omega+d\eta\in\mathcal{G}^1(c,a)$.

The transversality condition on compact subsets is an open condition. Hence, we can successively apply Lemma \ref{Wu} in $\mathcal{V}_i$, to obtain an exact 2-form $d\eta_i\in\overline{\Omega}^2(M)$, with $C^r$-norm small, and such that the invariant manifolds are transversal in $\overline{\mathcal{V}}_1\cup\ldots\cup\overline{\mathcal{V}}_i$, for $1\leq i\leq l$.

Since the number of closed orbits of period $<a$ is finite, repeating the same arguments for each possible pair of hyperbolic orbits of period $<a$, in such a way that the perturbation supports are isolated, we obtain an exact 2-form $d\eta$ in $M$, with $C^r$-norm arbitrarily small, such that $\Omega+d\eta\in\mathcal{K}(c,a)$.\\

Recall that a submanifold $\mathcal{N}$ of a symplectic manifold $(\mathcal{M}^{2n},\omega)$ is \textit{Lagrangian} when $\dim(\mathcal{M})=2\dim(\mathcal{N})$ and $i^*_\mathcal{N}\omega\equiv0$, where $i_\mathcal{N}:\mathcal{N}\to\mathcal{M}$ denotes the inclusion map. The following are easy consequence of the definition and Darboux coordinates. Let $H:\mathcal{M}\to\mathbb{R}$ be a Hamiltonian of class $C^2$.

\begin{itemize}
	\item If $\mathcal{N}\subset H^{-1}(c)$, then $\mathcal{N}$ is Lagrangian if and only if the Hamiltonian vector field $X_H$ is tangent to $\mathcal{N}$.
	\item If $\mathcal{N}\subset H^{-1}(c)$ is Lagrangian and $\theta\in\mathcal{N}$, such that $X_H(\theta)\neq0$. Then there exist a neighborhood $U\subset\mathcal{M}$ of $\theta$ and a coordinate system $(x,y):U\to\mathbb{R}^n\times\mathbb{R}^n$ such that $\omega=\sum_idx_i\wedge dy_i$, $\mathcal{N}\cap U=[y\equiv0]$ and $X_H|_\mathcal{N}=\partial/\partial x_1$.
\end{itemize}

\begin{lem}
	Let $\mathcal{N}$ and $\mathcal{N}_0$ be two Lagrangian submanifolds contained in an energy level $c$ of a Hamiltonian $H:\mathcal{M}\to\mathbb{R}$ on a symplectic manifold $(\mathcal{M}^{2n},\omega)$ , Let $\theta\in\mathcal{N}$ be a non-singular point for the Hamiltonian vector field $X_H$. Let $(t,x,y):U\to[0,1]\times[-\varepsilon,\varepsilon]^{n-1}\times[-\varepsilon,\varepsilon]^n$ be the Darboux coordinates for $\mathcal{N}$ in a neighborhood $U$ of $\theta\in\mathcal{N}$. Then, given $0<\varepsilon_2<\varepsilon_1<\varepsilon$, there exist a sequence of submanifolds $\mathcal{N}_k\subset H^{-1}(c)$ with dimension $n$, such that 
	
	\begin{enumerate}
		\item $\mathcal{N}_k\to\mathcal{N}$ in the $C^\infty$-topology,
		\item $\mathcal{N}\cap A=\mathcal{N}_k\cap A$, where $x=(x_2,\ldots x_n)$, $y=(y_1,\ldots,y_n)$ and
		
		$$A=\{(t,x,y)\in\mathbb{R}^{2n}:\|x\|\geq\varepsilon_1\text{ or }0\leq t\leq1/4\},$$
		
		\item $\mathcal{N}_k$ are invariant in $A\cup B$, where
		
		$$B=\{(t,x,y)\in\mathbb{R}^{2n}:\|x\|\leq\varepsilon\text{ and }1/2\leq t\leq 1\},$$
		
		\item $\mathcal{N}_k\cap C$ are invariant and transversal to $\mathcal{N}_0$, where
		
		$$C=\{(t,x,y)\in\mathbb{R}^{2n}:\|x\|\leq\varepsilon_2\text{ and }1/2\leq t\leq 1\},$$
		
		\item $\int_{\mathcal{N}_k}i^*_k\omega=0$, where $i_k:\mathcal{N}_k\hookrightarrow U$ is the inclusion.

	\end{enumerate}

\end{lem}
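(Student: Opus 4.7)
The plan is to construct $\mathcal{N}_k$ inside the Darboux chart as a compactly supported graphical perturbation of $\mathcal{N}=\{y=0\}$, built in two stages: an $X_H$-invariant Lagrangian piece $\tilde L_k$ defined over the region $B$, and a smooth interpolation between $\mathcal{N}$ and $\tilde L_k$ across the strip $\{1/4\le t\le 1/2,\;\|x\|\le\varepsilon_1\}$. Outside the chart we set $\mathcal{N}_k:=\mathcal{N}$. In the chart, $\omega=dt\wedge dy_1+\sum_{i\ge 2}dx_i\wedge dy_i$ and $X_H|_\mathcal{N}=\partial_t$ together force (after normalization) $dH|_\mathcal{N}=dy_1$, so $H^{-1}(c)$ is a smooth graph $y_1=G(t,x,y_2,\dots,y_n)$ near $\mathcal{N}$; consequently any graph $\{y=\Psi(t,x)\}\subset H^{-1}(c)$ is determined by its last $n-1$ components, which we may prescribe freely subject to smallness and support conditions.

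For the invariant piece, fix a Poincar\'e section $S:=\{t=3/4\}\cap H^{-1}(c)$, a symplectic $(2n-2)$-submanifold in which $\mathcal{N}\cap S$ is the Lagrangian zero section. After an arbitrarily small generic perturbation of the section time, which is allowed since by a discreteness argument as in Lemma \ref{vertical} applied to $T\mathcal{N}_0$ the tangency times form a discrete set, the intersection $\mathcal{N}_0\cap S$ is an $(n-1)$-submanifold of $S$. Choose an $(n-1)$-dimensional Lagrangian $L_k\subset S$ of generating-function form $\{y=df_k(x)\}$ with $\mathrm{supp}(f_k)\subset\{\|x\|\le\varepsilon_1\}$ and $\|f_k\|_{C^\infty}\le 1/k$; by parametric transversality a generic such tilt makes $L_k$ transversal in $S$ to $\mathcal{N}_0\cap S$ throughout $\{\|x\|\le\varepsilon_2\}$. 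Define
\[
\tilde L_k\;:=\;\bigcup_{s\in\mathbb{R}}\phi_s^H(L_k),
\]
an $X_H$-invariant, $n$-dimensional Lagrangian submanifold of $H^{-1}(c)$, $C^\infty$-close to $\mathcal{N}$ and defined on a neighborhood of $\{1/4\le t\le 1\}\times[-\varepsilon,\varepsilon]^{n-1}$; note that $\tilde L_k$ coincides with $\mathcal{N}$ whenever $\|x\|\ge\varepsilon_1$, because the support of $f_k$ lies in $\{\|x\|\le\varepsilon_1\}$ and $\mathcal{N}$ is flow-invariant. Since $X_H$ is tangent to both $\tilde L_k$ and $\mathcal{N}_0$, transversality of $L_k$ to $\mathcal{N}_0\cap S$ in $S$ upgrades to transversality of $\tilde L_k$ to $\mathcal{N}_0$ in $H^{-1}(c)$ on $C$, which is condition (4).

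For the interpolation, represent $\tilde L_k$ on $\{1/4\le t\le 1\}\cap\{\|x\|\le\varepsilon_1\}$ as a graph $y=\Psi_k(t,x)\subset H^{-1}(c)$ with $\|\Psi_k\|_{C^\infty}\to 0$. Pick a smooth cutoff $\alpha:[0,1]\to[0,1]$ with $\alpha\equiv 0$ on $[0,1/4]$ and $\alpha\equiv 1$ on $[1/2,1]$, and define
\[
\mathcal{N}_k\cap U\;:=\;\bigl\{y_i=\alpha(t)\Psi_{k,i}(t,x)\ (i\ge 2),\ y_1=G\bigl(t,x,\alpha\Psi_{k,2},\dots,\alpha\Psi_{k,n}\bigr)\bigr\}.
\]
Here the $y_1$ equation is forced by the energy constraint, so $\mathcal{N}_k\subset H^{-1}(c)$ by construction, and it reduces to $0$ on $A$ (giving $\mathcal{N}_k=\mathcal{N}$ there) and to $\Psi_{k,1}$ on $B$ (giving $\mathcal{N}_k=\tilde L_k$ there). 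Properties (1) and (2) follow from the smallness and support properties of $\alpha$ and $\Psi_k$; property (3) follows because $\mathcal{N}$ is invariant under $\partial_t$ on $A$ and $\tilde L_k$ is invariant on $B$ by construction.

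The main obstacle is condition (5). On $A$ and $B$ the submanifold $\mathcal{N}_k$ is Lagrangian, hence $i_k^*\omega\equiv 0$ there; only on the transition strip can the Lagrangian identity fail, because rescaling the graph components by $\alpha(t)$ does not in general commute with $\omega$. For the \emph{integrated} reading of (5), the remedy is cohomological: $\mathcal{N}_k$ differs from $\mathcal{N}$ in a compact region, so Stokes applied to an $(n+1)$-chain $W$ with $\partial W=\mathcal{N}_k-\mathcal{N}$ yields $\int_{\mathcal{N}_k}i_k^*\omega-\int_{\mathcal{N}}i^*\omega=\int_W d\omega=0$, and the right-hand integral vanishes because $\mathcal{N}$ is Lagrangian. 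If instead (5) is to hold pointwise (i.e., $\mathcal{N}_k$ must be Lagrangian throughout), one performs the cutoff at the level of a Weinstein generating function $F_k$ for $\tilde L_k$ rather than on the graph itself, setting $\tilde F_k=\alpha(t)F_k$, and absorbs the loss of the $H$-constraint in the strip by a further $C^\infty$-small correction obtained by solving along $\partial_t$ the Hamilton--Jacobi equation $H(t,x,d\tilde F_k)=c$, which in this normal form reduces to an ODE; this is the delicate step where careful matching of derivatives across $t=1/4$ and $t=1/2$ is required.
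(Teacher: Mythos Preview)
Your argument is essentially correct but takes a considerably more elaborate route than the paper. The paper proceeds by a direct, explicit ansatz: it chooses bump functions $\alpha$ (in the $x$-variables, supported in $\{|x_i|\le\varepsilon_1\}$ and equal to $1$ on $\{|x_i|\le\varepsilon_2\}$, with $\int\alpha=0$) and $\beta$ (in $t$, vanishing on $[0,1/4]$ and equal to $1$ on $[1/2,1]$), sets $y_i=s_i\,\alpha(x_i)\,\beta(t)$ for $i\ge 2$ with a parameter $s\in\mathbb{R}^{n-1}$, and solves the energy constraint for $y_1=f_s^1(t,x)$ via the implicit function theorem. On $B$ one has $\beta'\equiv 0$, so a direct computation shows $i_s^*\omega=0$ there; the Lagrangian condition then gives tangency of $X_H$, which is item (3). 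On $C$ one has in addition $\alpha\equiv 1$, so $\mathcal{N}_s\cap C=\{\hat y=s\}\cap H^{-1}(c)$, and item (4) follows from Sard's theorem applied to the projection $\hat y|_{\mathcal{N}_0}$: one simply picks a sequence of regular values $s_k\to 0$. Item (5) is obtained by integrating the explicit formula $i_s^*\omega=-\sum_i s_i\,\alpha(x_i)\,\beta'(t)\,dt\wedge dx_i$ and using $\int\alpha=0$.

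By contrast, you manufacture invariance on $B$ by flow-saturating a generating-function perturbation in a Poincar\'e section, then interpolate in $t$; transversality comes from parametric transversality in the section and is propagated by the flow, and (5) from a Stokes/cobordism argument. Your approach is more conceptual and would adapt to situations where no separable ansatz is available, and your Stokes argument for (5) is arguably more robust than the paper's explicit integral (which hinges on the somewhat awkward normalization $\int\alpha=0$). On the other hand, the paper's proof is strikingly short and elementary: no Poincar\'e section, no flow-saturation, no Hamilton--Jacobi matching---just a product formula plus Sard. A minor technical remark on your version: to propagate transversality from $L_k$ to $\tilde L_k$ on all of $C$ you should arrange transversality of $L_k$ on a set slightly larger than $\{\|x\|\le\varepsilon_2\}$, since the flow is only $C^\infty$-close to $\partial_t$ and may move the $x$-coordinate slightly; and your appeal to Lemma~\ref{vertical} is unnecessary, since $\mathcal{N}_0\pitchfork S$ already follows from the tangency of $X_H$ to $\mathcal{N}_0$.
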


\begin{proof}
	Let $\alpha:[-\varepsilon,\varepsilon]\to[0,1]$ and $\beta:[0,1]\to[0,1]$ be smooth functions, such that $\alpha(t)$ is $0$ if $|t|>\varepsilon_1$ and $1$ if $|t|\leq\varepsilon_2$, also that $\int\alpha=0$, on the other hand $\beta(t)$ is 0 if $t\in[0,1/4]$ and 1 if $t\in[1/2,1]$. For $s=(s_2,\ldots,s_n)\in\mathbb{R}^{n-1}$ with $\|s\|$ small, consider $f_s:[0,1]\times[-\varepsilon,\varepsilon]^{n-1}\to\mathbb{R}^n$ defined as 
	
	$$f_s(t,x)=(f^1_s(t,x),s_2\alpha(x_2)\beta(t),\ldots,s_n\alpha(x_n)\beta(t))$$
	
	and $f^1_s$ is defined by:
	
	\begin{eqnarray}\label{f1}
		H(t,x,f_s(t,x))=c.
	\end{eqnarray}
	
	Since the curves $t\mapsto(t,x,0,0)\subset\mathcal{N}$ are solutions of the Hamiltonian system $(\mathcal{M},\omega,H)$, we have that $H(t,x,0,0)=c$ and $\frac{\partial H}{\partial y_1}(t,x,0,0)\neq0$. By the implicit function theorem, for any $s$ with $\|s\|$ sufficiently small, we can solve equation (\ref{f1}) for $(t,x)\mapsto f^1_s(t,x)$ with $f^1_s$ of class $C^\infty$.
	
	We define $\mathcal{N}_s=\{(t,x,f_s(t,x))\in\mathbb{R}^{2n}:(t,x)\in[0,1]\times[-\varepsilon,\varepsilon]^{n-1}\}$. By construction the supports of the maps $f_s$ are fixed and $\lim_{s\to 0}f_s=0$. Therefore, $\mathcal{N}_s\to \mathcal{N}$ in the $C^\infty$-topology when $s\to 0$. Since $f_s(t,x)=0$ for every $(t,x)\in A$, then $\mathcal{N}_s\cap A=\mathcal{N}\cap A$. Moreover, we have that 
	
	\begin{eqnarray*}
		i^*_s\omega&=&i^*_s(dt\wedge dy_1+dx_2\wedge dy_2+\cdots+dx_n\wedge dy_n)\\
		&=&-s_2\alpha(x_2)\beta'(t)dt\wedge dx_2-\cdots-s_n\alpha(x_n)\beta'(t)dt\wedge dx_n.
	\end{eqnarray*}
	
	for every $s$ with $\|s\|$ small, where $i_s:\mathcal{N}_s\hookrightarrow U$ denote the inclusion. Since $\beta'(t)=0$ for every $t\in[1/2,1]$, the submanifolds $\mathcal{N}_s\cap B$ are Lagrangian. Hence $H(\mathcal{N}_s)=c$. Note that 2-form $i^*_s\omega$ has compact support and
	
	$$\int_{\mathcal{N}_s}i^*_s\omega=-\sum_{i=2}^{n}s_i\int_{0}^{1}\beta'(t)\left(\int_{-\varepsilon}^{\varepsilon}\alpha (x_i)dx_i\right)dt=0, $$
	
	for every $s$  with $\|s\|$ small.
	
	Observe that $\mathcal{N}_s\cap C=[\widehat{y}=s]\cap H^{-1}(c)$, where $\widehat{y}=(y_2,\ldots,y_n)$. It is a basic fact about transversality that $\mathcal{N}_s\cap C$ and $\mathcal{N}_0\subset H^{-1}(c)$ are transversal in $H^{-1}(c)$ is and only if $\|s\|$ is small, is a regular value of the map $\rho|_{\mathcal{N}_0}$, where $\rho(y)=\widehat{y}$. Then, by Sard's theorem, we have that there is a sequence $s_n\to 0$ for which $\mathcal{N}_{s_n}$ satisfy the theorem.
	
\end{proof}

\textbf{Acknowledgments}

This work was partially supported by CNPq, CAPES, FAPERJ and PRONEX-DS.
 
\end{section}

\end{document}